\UseRawInputEncoding
\documentclass{amsart}
\usepackage{amssymb,amscd}
\usepackage{graphicx}
\usepackage[T1]{fontenc}
\usepackage[english]{babel}
\usepackage{color} \usepackage[usenames,dvipsnames]{xcolor}
\usepackage{amsthm,amssymb,amsmath}
\usepackage{enumerate}
\usepackage{mathtools}
\usepackage{bm,array}

% THEOREM Environments ---------------------------------------------------
 \newtheorem{thm}{Theorem}[section]
  \newtheorem{thmA}{Theorem}

 \newtheorem{cor}[thm]{Corollary}
 \newtheorem{lemma}[thm]{Lemma}
 \newtheorem{prop}[thm]{Proposition}
 \theoremstyle{definition}
 \newtheorem{defn}[thm]{Definition}

 \theoremstyle{remark}
 \newtheorem{rem}[thm]{Remark}

 \numberwithin{equation}{subsection}
 \newtheorem{ack}{Acknowledgment}

% MATH -------------------------------------------------------------------
\newcommand{\cT}{\text{$\mathcal{T}$}}

\newcommand{\cE}{\text{$\mathcal{E}$}}

\newcommand{\FF}{\text{$\mathcal{F}$}}

\newcommand{\cL}{\text{$\mathcal{L}$}}

\newcommand{\id}{\operatorname{id}}

\newcommand{\Homeo}{\operatorname{Homeo}}

\newcommand{\St}{\operatorname{St}}

\newcommand{\lvl}{\operatorname{Lvl}}

% Field Symbols
        \newcommand{\field}[1]{\text{$\mathbb{#1}$}}
        \newcommand{\N}{\field{N}}

        \newcommand{\R}{\field{R}}

 %%%%%%%%%%%%%%%%%%%%%%%%%%%

\newdimen\theight
\def\TeXref#1{%
             \leavevmode\vadjust{\setbox0=\hbox{{\cT
                     \quad\quad  {\small \textrm #1}}}%
             \theight=\ht0
             \advance\theight by \lineskip
             \kern -\theight \vbox to
             \theight{\rightline{\rlap{\box0}}%
             \vss}%
             }}%

%%% ----------------------------------------------------------------------
\makeatletter
\def\blfootnote{\gdef\@thefnmark{}\@footnotetext}
\makeatother

\begin{document}

 \title[Minimal Hyperbolic Foliations with non simply-connected generic leaf]{Leaf topology of minimal hyperbolic foliations with non simply-connected generic leaf}                                                                                                                                                                                                               

\author{Paulo Gusm\~ao$^\dagger$}
\author{Carlos Meni\~no Cot\'on$^\ddagger$}

\blfootnote{\textup{2010} \textit{Mathematics Subject Classification}: Primary 57R30}
\maketitle

\address {$\dagger$ Departamento de An\'alise, Instituto de Matem\'atica e Estat\'istica, Universidade Federal Fluminense.\ 
%	Rua Professor Marcos Waldemar de Freitas Reis  S/N, Campus Gragoatá, Niteroi, Rio de Janeiro 21941-916, Brazil.}
	
\address{$\ddagger$ CITMAGA \&
Departamento de Matem\'atica Aplicada I, Universidade de Vigo\\ 
Instituto Innovacións Tecnolóxicas, CP 15782 Santiago de Compostela \& Escola de Enxe\~ner\'ia Industrial, Rua Conde de Torrecedeira 86, CP 36208, Vigo, Spain.}\\

\email{email: phcgusmao@id.uff.br \& carlos.menino@uvigo.es}

\begin{abstract}
A noncompact (oriented) surface satisfies the condition $(\star)$ if
{\it their isolated ends are accumulated by genus}. We show that every  surface satisfying this condition is homeomorphic to the  leaf   of a  minimal codimension one foliation on a closed $3$-manifold whose generic leaf is not simply connected. Moreover, the above result is also true for any prescription of a countable family of noncompact surfaces (satisfying $(\star)$): they can coexist in the same minimal codimension one  foliation as above. All the given examples are hyperbolic foliations, meaning that they admit a leafwise Riemannian metric of constant negative curvature.
\end{abstract}

%%%%%%%%%%%%%%%%%%%%%%%%%%%%%%%%%%%%%%%%%%%%%%%%
\section{Introduction}
%%%%%%%%%%%%%%%%%%%%%%%%%%%%%%%%%%%%%%%%%%%%%%%
It is well known that every surface is homeomorphic to a leaf of a $C^\infty$ codimension one foliation on a compact $3$-manifold (see \cite{Cantwell-Conlon-1987}). In \cite{Cantwell-Conlon-1987}, the exotic leaf topology appears at the infinite level of the foliations and it is obtained as a Hausdorff limit of finite level leaves (see \cite[Chapter 5]{Candel-Conlon-I-2000} for an overview on the theory of levels). Every foliation with more than one level cannot be minimal, this opens the question about what noncompact surfaces can be homeomorphic to leaves of a minimal foliation. Observe that any codimension one oriented foliation on a $3$-manifold with no transverse invariant measure (and that is the usual case) admits a leafwise hyperbolic metric varying continuously in the ambient space \cite{Candel1}. Thus, it is natural to study this problem on minimal hyperbolic foliations.

The study of minimal hyperbolic foliations on closed $3$-manifolds is extensively treated in \cite{ADMV2} but the possible topologies of leaves that can occur on the given foliations is not (completely) treated in that work. 

In the recent work \cite{ABMP}, S. \'Alvarez, J. Brum, M. Mart\'inez, R. Potrie provide an interesting construction of a minimal hyperbolic lamination on a compact space where all the noncompact oriented surfaces are realized (topologically) as leaves.

Using completely different techniques \cite{Gusmao-Menino} the authors prove that for every countable family of noncompact oriented surfaces, there exists a transversely minimal hyperbolic foliation $\FF$ on some Seifert closed $3$-manifolds, the foliation being transverse to its fibers, and such that any surface of the family appears as a leaf. In both works the generic leaves are planes. Here generic means from the topological point of view, i.e., a residual set formed by homeomorphic leaves \cite{Cantwell-Conlon-1998}. The generic leaf of any minimal hyperbolic foliation on any closed $3$-manifolds can be one of the following:
\begin{itemize}
\item[(a)] a plane,

\item[(b)] a Loch-Ness monster (a plane with infinitely many handles attached),

\item[(c)] a cilinder,

\item[(d)] a Jacob's ladder\footnote{In this work, Jacob's ladder is always considered with $2$ ends, the one-ended Jacob's ladder (pictured in Figure~\ref{f:figure_3}) is homeomorphic (but not quasi-isometric) to the Loch Ness monster; since we are only interested in topology we shall use ``Loch Ness monster'' for any one-ended surface with nonplanar end.} (a cilinder with infinitely many  handles attached accumulating to both ends),

\item[(e)] a Cantor tree (the sphere minus a Cantor set),

\item[(f)] a Cantor tree with handles (a Cantor tree with a handle attached to each bifurcation, thus every end is accumulated by genus)\;.
\end{itemize}

We recall the following result that is a direct consequence from Theorems $4$ and $2$ in \cite{ADMV}.

\begin{prop}\cite{ADMV}\label{p:systolic loop}
If a leaf of a minimal hyperbolic foliation on a closed $3$-manifold contains a nontrivial loop which supports trivial holonomy, then each end of any leaf is not simultaenously planar and isolated. 
\end{prop}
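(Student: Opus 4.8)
The plan is to deduce the statement from a soft property of isolated planar (``funnel'') ends combined with a uniform recurrence property of short closed geodesics, the latter being exactly where Theorems~2 and~4 of \cite{ADMV} are used.

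\emph{Preliminaries.} I would first record that no leaf of $\mathcal{F}$ carries an essential loop of length less than some fixed $\epsilon_0>0$: since $M$ is closed and the leafwise hyperbolic metric varies continuously, it is uniformly bi-Lipschitz to the metric induced by an ambient Riemannian metric, so an essential loop of sufficiently small length has small enough diameter in $M$ to lie inside a single foliated plaque, hence to be null-homotopic in its leaf --- absurd. In particular no leaf has a cusp. As holonomy depends only on the free homotopy class of a loop in its leaf, and an essential non-parabolic class contains a closed geodesic, I may replace the loop of the hypothesis by a closed geodesic $c_0\subset L_0$, of some length $\ell_0\geq\epsilon_0$, still supporting trivial holonomy. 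Likewise, an end of a leaf $L$ that is isolated and planar, not being a cusp, has a \emph{funnel} neighbourhood: a half-open cylinder $V\cong[0,\infty)\times S^1$ bounded by a simple closed geodesic $c_e=\partial V$ of length $\geq\epsilon_0$, with induced metric $dr^2+\ell(c_e)^2\cosh^2(r)\,d\theta^2$, where $r\colon V\to[0,\infty)$ is distance to $c_e$. Three elementary properties of $V$ will be used: $r$ is $1$-Lipschitz; a path in $L$ from a point of $V$ at depth $d$ to a point outside $V$ has length at least $d$; and any loop $\gamma\subset V$ that is essential in $L$ is freely homotopic to a nonzero power of $c_e$, so $\ell(\gamma)\geq\epsilon_0\cosh(\min_\gamma r)$.

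\emph{Input from \cite{ADMV}.} The role of Theorems~2 and~4 of \cite{ADMV} is to promote the single loop $c_0$ to a statement uniform over all leaves. On a small transversal through a point of $c_0$ the holonomy of $c_0$ is the identity, so nearby leaves carry parallel loops freely homotopic in $M$ to $c_0$; these stay essential in their leaves (no vanishing cycle in the taut foliation $\mathcal{F}$) and support trivial holonomy, with geodesic representatives of length close to $\ell_0$; and since by minimality every leaf meets this transversal densely, one extracts constants $L_0\geq\epsilon_0$ and $R_0>0$, independent of the leaf, such that \emph{every} point $x$ of \emph{every} leaf $L$ lies within leafwise distance $R_0$ of a closed geodesic $c_x\subset L$ with $\ell(c_x)\leq L_0$. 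This uniform recurrence of short closed geodesics is what I would quote as a black box.

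\emph{The funnel argument.} Suppose, for contradiction, that a leaf $L$ has an end that is isolated and planar, with funnel neighbourhood $V$ as above. Pick $d$ large (depending on $\epsilon_0,L_0,R_0$) and $x\in V$ with $r(x)=d$, and let $c:=c_x$ be the associated short closed geodesic, with $y\in c$ at leafwise distance $\leq R_0$ from $x$. As $R_0<d$, a shortest path from $x$ to $y$ stays in $V$, so $y\in V$ and $r(y)\geq d-R_0>L_0$; as $\ell(c)\leq L_0<r(y)$, the whole loop $c$ lies in $V$ and $\min_c r\geq d-R_0-L_0/2$. Since $c$, being a closed geodesic, is essential, the third property of $V$ gives $L_0\geq\ell(c)\geq\epsilon_0\cosh(d-R_0-L_0/2)$, which fails for $d$ large. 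This contradiction proves the proposition.

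\emph{Expected main obstacle.} Everything except the displayed recurrence statement is elementary planar hyperbolic geometry. The real difficulty, and the reason \cite[Theorems~2 and~4]{ADMV} are invoked, is the passage from \emph{one} trivial-holonomy loop in \emph{one} leaf to a constant $R_0$ valid at \emph{every} point of \emph{every} leaf: minimality only provides ambient proximity between leaves, whereas the funnel argument requires short closed geodesics that are \emph{leafwise} close to points arbitrarily deep in the funnel, and supplying those --- through the foliated geodesic flow --- is the substantive part carried out in \cite{ADMV}.
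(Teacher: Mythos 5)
The paper does not supply its own proof of this proposition; it is presented as a direct consequence of Theorems 2 and 4 of \cite{ADMV}, with no argument written out, so there is strictly speaking nothing in the text to compare against. Your reconstruction is correct and fills in the intended reasoning. The preliminaries are standard: compactness of $M$ together with continuity of the leafwise metric gives a uniform systole lower bound $\epsilon_0$, excluding cusps, so an isolated planar end has a funnel neighbourhood $V$ bounded by a simple closed geodesic of length $\geq\epsilon_0$, and the three listed properties of $V$ are elementary. The funnel computation is sound: once $d>R_0+L_0$, the short closed geodesic $c$ is trapped in $V$ at depth $\geq d-R_0-L_0/2$, and $\ell(c)\leq L_0$ contradicts $\ell(c)\geq\epsilon_0\cosh(d-R_0-L_0/2)$ for $d$ large. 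The black-box recurrence statement you attribute to \cite{ADMV} is a reasonable packaging of what that paper provides; it is worth noting that it can also be extracted directly from the hypotheses by a compactness argument: take the open foliated neighbourhood $V_0$ of $c_0$ furnished by trivial holonomy (every point of which lies on an essential loop of length close to $\ell_0$), observe that the sets $W_R$ of points at leafwise distance $<R$ from $V_0$ are open, increase with $R$, and exhaust $M$ by minimality, so compactness gives $W_{R_0}=M$ for some finite $R_0$; converting proximity to that essential loop into proximity to its closed geodesic representative then uses the standard annulus-cover estimate, which bounds the distance in terms of $L_0$ and $\epsilon_0$ only. This last conversion is the one step your sketch passes over silently (the parallel loop in a nearby leaf is not itself geodesic), but it is routine hyperbolic geometry and does not affect the validity of the argument.
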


This suggests the following definition.

\begin{defn}[Condition $(\star)$]
A noncompact oriented surface satisfies the {\em Condition} $(\star)$ if its isolated ends are nonplanar.
\end{defn}

Recall that the leaves without holonomy form a $G_\delta$ set (see \cite{E-M-T} and  \cite{Hector}). Therefore, from Proposition~\ref{p:systolic loop} we get the following Corollary.

\begin{cor} 
If the generic leaf of a minimal hyperbolic lamination is not simply connected (i.e., a plane), then all of its leaves satisfy the Condition $(\star)$. Moreover, if the generic leaf has positive genus, then every end of every leaf is accumulated by genus.
\end{cor}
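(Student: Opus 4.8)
The plan is to deduce both assertions from Proposition~\ref{p:systolic loop}, after first arranging that a generic leaf carries a loop supporting trivial holonomy.

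First I would recall that the set of leaves without holonomy is residual (it is a dense $G_\delta$; see \cite{E-M-T}, \cite{Hector}), and that, by the definition of generic leaf \cite{Cantwell-Conlon-1998}, the set of leaves homeomorphic to the generic one is also residual. Since the intersection of two residual sets is residual, there is a generic leaf $L_0$ which, moreover, has no holonomy at all. By hypothesis $L_0$ is not a plane, so $\pi_1(L_0)\neq 1$ and $L_0$ contains a nontrivial loop $\alpha$; as $L_0$ has no holonomy, $\alpha$ supports trivial holonomy. Proposition~\ref{p:systolic loop} then gives that no end of any leaf of the lamination is at once planar and isolated, which says exactly that every isolated end of every leaf is nonplanar, i.e.\ that every leaf satisfies Condition $(\star)$. (To stay literally within the hypotheses of Proposition~\ref{p:systolic loop} one notes that its proof, drawn from \cite{ADMV}, uses only minimality and the leafwise hyperbolic metric, so it applies verbatim to minimal hyperbolic laminations.)

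For the ``moreover'' part, assume in addition that the generic leaf --- hence $L_0$ --- has positive genus. Then $L_0$ contains a compact one-holed torus $T$, with core handle curve $\gamma$; again $\gamma$ has trivial holonomy, and in fact the whole holonomy representation of $\pi_1(T)$ is trivial because $L_0$ has no holonomy, so $T$ possesses a product neighbourhood $T\times(-\varepsilon,\varepsilon)$ in the lamination with each slice $T\times\{t\}$ contained in a single leaf. I would then fix an arbitrary leaf $L$, an arbitrary end $e$ of $L$, and an arbitrary neighbourhood $V$ of $e$ in $L$, and prove that $V$ has positive genus; this is precisely the assertion that the genus of every leaf accumulates at each of its ends. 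By minimality $L$ is dense, so $L$ meets the flow box $T\times(-\varepsilon,\varepsilon)$ and thereby contains an embedded copy of the one-holed torus $T$; and again by minimality (together with compactness of the ambient $3$-manifold) such a copy can be found arbitrarily far out along the end $e$, hence inside $V$. This is the step that plays the role of Theorems~2 and~4 of \cite{ADMV}, and it is exactly where one needs the handle itself --- not merely a single loop on it --- to shadow into the nearby leaves.

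I expect this last placement step to be the main obstacle: one must make precise that the copy of $T$ produced in the flow box actually lies in the prescribed end neighbourhood $V$, i.e.\ that the recurrence of $L$ forced by minimality can be realised \emph{through} the end $e$ and not merely elsewhere on $L$. The remaining ingredients --- the residual-set bookkeeping and the dictionary between ``no planar isolated end'', resp.\ ``genus at every end'', and the conclusions of \cite{ADMV} --- are routine.
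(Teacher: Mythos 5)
Your argument for the first assertion is exactly the paper's: the set of leaves without holonomy is residual, as is the set of leaves of the generic type, so some generic leaf has trivial holonomy; if it is not a plane it carries a nontrivial loop with trivial holonomy, and Proposition~\ref{p:systolic loop} then yields Condition $(\star)$ for every leaf. This matches the (very terse) reasoning in the paper.

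For the ``moreover'' clause the paper gives no argument of its own --- it simply presents it as following from Proposition~\ref{p:systolic loop}, whereas that proposition only forbids \emph{isolated} planar ends and so cannot by itself rule out, say, a Cantor-tree leaf; the paper is implicitly leaning on \cite{ADMV}. Your flow-box argument is therefore doing genuine extra work, and it is the right approach: a one-holed torus $T\subset L_0$ with trivial holonomy has a foliated product neighbourhood $T\times(-\varepsilon,\varepsilon)$, and one must show each end-neighbourhood $V$ of each leaf $L$ contains a full slice $T\times\{t\}$.

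The ``placement step'' you flag is where a short additional argument is needed, and it goes through. First, for any end $e$ of any leaf $L$, every end-neighbourhood $V$ is dense in $M$: the set $\bigcap_n \overline{V_n}$ (closures in $M$, $V_n$ a nested exhaustion of neighbourhoods of $e$) is nonempty (by compactness of $M$), closed, and saturated --- saturation is checked by transporting points of $V_m$ along a fixed leafwise path using the local product structure, with $m$ chosen large enough that $V_m$ avoids the leafwise $R$-ball around $\partial V_n$, which is possible because $V_m$ is eventually disjoint from any fixed compact subset of $L$. By minimality this saturated set is $M$, so $\overline{V_n}=M$ for all $n$. Second, $V$ is a component of $L\setminus K$ with $K\subset L$ compact, and a compact subset of a leaf meets only finitely many plaques of any (shrunk) flow box, hence only finitely many slices $T\times\{t\}$; since the dense set $V$ meets infinitely many slices, all but finitely many such slices are disjoint from $K$, and being connected subsets of $L\setminus K$ meeting $V$ they lie entirely in $V$. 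Thus $V$ has positive genus, and the end $e$ is accumulated by genus. With this completion your proposal is correct, and in fact more explicit than the paper on the second assertion.
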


In a more recent work \cite{AB}, S. \'Alvarez \& J. Brum describe the allowed topologies for non-generic leaves of laminations that can  occur when the topological type of the generic leaf is given, showing that Condition $(\star)$ is a necessary and sufficient condition.

\begin{thm} \cite{AB} \label{t:Alvarez-Brum}
Every surface that satisfies the Condition $(\star)$ is homeomorphic to a leaf of a minimal hyperbolic lamination whose generic leaf is a Cantor tree. Moreover, for any arbitrary (possibly uncountable) prescription of noncompact oriented surfaces satisfying the Condition $(\star)$, there exists a minimal hyperbolic lamination whose generic leaf is the Cantor tree and such that it contains the prescribed surfaces as leaves. The given hyperbolic lamination can be obtained as a suspension of homeomorphisms acting on a Cantor set.
\end{thm}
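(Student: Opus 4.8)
The plan (in the spirit of \cite{AB}) is to realize the lamination as a \emph{suspension} over a fixed closed hyperbolic surface. Fix a closed orientable surface $\Sigma_0$ of genus $g\ge 2$, write $\Gamma_0=\pi_1(\Sigma_0)$ and identify $\widetilde{\Sigma}_0$ with $\HH^2$. For any action $\rho\colon\Gamma_0\to\Homeo(C)$ on a Cantor set $C$, the suspension $M_\rho=(\HH^2\times C)/\Gamma_0$ is a compact foliated space whose leaves are the covering surfaces $\HH^2/\Stab_\rho(c)$, $c\in C$; it is a \emph{minimal} lamination exactly when $\rho$ is minimal, it is by construction a suspension of the finitely many homeomorphisms of $C$ generating $\rho(\Gamma_0)$, and --- crucially --- since every leaf is a Riemannian covering of the hyperbolic surface $\Sigma_0$, the leafwise metric is automatically hyperbolic and varies continuously, indeed transversely locally isometrically, on $M_\rho$ (cf.\ \cite{Candel1}). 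Thus the statement reduces to the following combinatorial/dynamical task: build a minimal action $\rho$ of $\Gamma_0$ on a Cantor set whose point stabilizers produce a Cantor tree at a residual set of points and each prescribed surface at a prescribed point.

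The topological input is that \emph{every surface satisfying $(\star)$ is homeomorphic to a covering space of $\Sigma_0$}. To see this I would realize $\Gamma_0$ as the fundamental group of a finite graph of groups $\mathbb{G}$ arising from a decomposition of $\Sigma_0$ into pairs of pants, with $\Z$ edge groups and underlying graph of free fundamental group of rank $g$; then, given a surface $S$ with $(\star)$, I would build $S$ by gluing copies of pairs of pants along a (generally infinite, locally finite) covering $\mathbb{G}_S\to\mathbb{G}$ of graphs of groups whose combinatorics encode the genus distribution and the end space of $S$. Condition $(\star)$ is precisely what makes such a covering completable: every isolated end of $S$ is then modelled on an eventually cyclic gluing pattern, which necessarily accumulates genus, and there is no further obstruction; one gets $H_S:=\pi_1(\mathbb{G}_S)\le\Gamma_0$ with $\HH^2/H_S\cong S$. (The necessity of $(\star)$ for leaves of such laminations is the content of Proposition~\ref{p:systolic loop} and its corollary.) Choosing for $\mathbb{G}_S$ the pullback of the universal cover of the underlying graph of $\mathbb{G}$ --- a genuinely branching tree, since that graph has free fundamental group of rank $g\ge 2$ --- one obtains a distinguished $H_0\le\Gamma_0$ whose cover $S_0=\HH^2/H_0$ is pairs of pants glued along a branching tree, i.e.\ the Cantor tree.

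The heart of the proof, and the step I expect to be the main obstacle, is to produce a \emph{single} minimal action $\rho\colon\Gamma_0\to\Homeo(C)$ such that $\HH^2/\Stab_\rho(c)\cong S_0$ for $c$ in a dense $G_\delta$, while for each prescribed $S_i$ there is $c_i\in C$ with $\HH^2/\Stab_\rho(c_i)\cong S_i$. I would construct $C$ as an inverse limit $C=\varprojlim C_n$ of finite $\Gamma_0$-sets, organized as a graph tower adapted to $\mathbb{G}$ --- each $C_n$ recording a larger finite piece of the covering seen by each point --- and exploit the combinatorial freedom in the bonding maps $C_{n+1}\to C_n$ to do two things at once: (a) impose enough branching at every level that $\rho$ is minimal (a primitivity condition) and that the generic itinerary reproduces the branching-tree pattern of $S_0$; and (b) pin down, along the fiber over each chosen $c_i$, the full covering $\mathbb{G}_i\to\mathbb{G}$ realizing $S_i$. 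The delicacy is that (a) and (b) pull in opposite directions: a subgroup $H_{S_i}$ carrying genus is neither a sub- nor an overgroup of any conjugate of $H_0$, so the tower must be genuinely inhomogeneous --- rigid along the countably many chosen fibers yet branching enough elsewhere that no proper closed invariant set forms and the Cantor-tree behaviour remains residual. For an arbitrary, possibly uncountable, prescribed family one instead distributes the required homeomorphism types among the uncountably many orbits of such a system, still keeping the Cantor-tree behaviour on a residual complement.

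Granting such a $\rho$, the space $M_\rho=(\HH^2\times C)/\Gamma_0$ is a minimal hyperbolic lamination, realized as a suspension of homeomorphisms of a Cantor set, whose generic leaf --- in the residual-set sense of \cite{Cantwell-Conlon-1998} --- is the Cantor tree, and which contains $\HH^2/\Stab_\rho(c_i)\cong S_i$ as a leaf for each $i$; performing all the gluings coherently with orientations makes every leaf oriented. The one remaining verification is that $\{c\in C:\HH^2/\Stab_\rho(c)\cong S_0\}$ is a dense $G_\delta$, which is guaranteed by the primitivity/branching conditions built into the tower.
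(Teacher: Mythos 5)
This theorem is quoted from \cite{AB}; the present paper supplies no proof of it, so there is no internal argument to compare your proposal against. Judged on its own merits, your suspension framework is sensible: leaves of $(\HH^2\times C)/\Gamma_0$ are indeed the covers $\HH^2/\Stab_\rho(c)$, minimality of $\rho$ does give minimality of the lamination, and the leafwise hyperbolic metric is inherited and varies continuously. But two load-bearing steps are asserted rather than proved, and together they contain essentially all of the content of the theorem.

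First, you claim without argument that every surface satisfying $(\star)$ covers a single fixed closed surface $\Sigma_0$. Rephrased through a pants decomposition, this says that for each such $S$ there is a graph covering $T\to G$ of the finite trivalent dual graph $G$ producing $S$; that is not automatic, since not every locally finite trivalent graph covers a prescribed finite trivalent graph (for the theta graph, for instance, a cover must be bipartite and admit a proper $3$-edge-colouring). One would have to show the needed combinatorics can always be realised as a cover of some fixed $G$, or else choose $G$ and the decomposition with enough care to kill the obstructions — a nontrivial lemma you do not supply. Second, the construction of a single \emph{minimal} Cantor action $\rho$ whose stabilizers realise the Cantor tree residually while hitting every prescribed $S_i$ (indeed uncountably many of them) is exactly where the work is, and you flag it yourself as ``the main obstacle'' and only gesture at an inverse-limit tower without specifying bonding maps, verifying minimality, or proving residuality of the Cantor-tree fibres. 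Until both of these are carried out, what you have is a plausible road map, not a proof. For what it is worth, my understanding is that \cite{AB} sidestep the covering-space claim altogether by gluing trivially laminated pair-of-pants pieces $P\times C$ along homeomorphisms of $C$, rather than suspending over a closed surface group; that builds the lamination directly without ever having to exhibit each $S_i$ as a cover of a fixed $\Sigma_0$.
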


When the generic leaf has handles, the condition on leaves is strenghtened. 

\begin{defn}[Condition $(\star \star)$]
A noncompact oriented surface satisfies the {\em Condition} $(\star \star)$ if every end is accumulated by genus.
\end{defn}

%Remark that the set of surfaces satisfiyng the condition $(\star \star)$ is a subset of those satisfiyng the condition $(\star)$. 

\begin{cor} \cite{AB}  
Every surface that satisfies the Condition $(\star \star)$  is homeomorphic to a leaf of a minimal hyperbolic lamination whose generic leaf is a Cantor tree with handles. Moreover, for any arbitrary (possible uncountable) prescription of noncompact oriented surfaces satisfying the Condition $(\star\star)$, there exists a minimal hyperbolic lamination whose generic leaf is the Cantor tree  with handles and such that it contains the prescribed surfaces as leaves. The given hyperbolic lamination can be obtained as a suspension of homeomorphisms acting on a Cantor set.
\end{cor}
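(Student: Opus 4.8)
The plan is to deduce the statement from Theorem~\ref{t:Alvarez-Brum} by a controlled handle grafting performed on the base surface of the suspension produced there. First note that Condition $(\star\star)$ implies Condition $(\star)$: an end accumulated by genus is nonplanar, and isolated ends are in particular ends. Hence any prescribed family $\{S_i\}_{i\in I}$ of surfaces satisfying $(\star\star)$ is eligible for Theorem~\ref{t:Alvarez-Brum}, which provides a compact hyperbolic surface $\Sigma$ (possibly with geodesic boundary), a minimal representation $\rho\colon\pi_1(\Sigma)\to\Homeo(C)$ on a Cantor set $C$, and the associated minimal hyperbolic lamination $\cL$ (the suspension of $\rho$) whose generic leaf is the Cantor tree and which contains each $S_i$ as a leaf, say $S_i=\HH^2/\Stab_\rho(c_i)$ for suitable $c_i\in C$. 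Since every $S_i$ already satisfies $(\star\star)$, i.e.\ is ``genus--saturated'', the idea is to modify $\cL$ so that the generic leaf acquires handles accumulating to every end (becoming the Cantor tree with handles) while the homeomorphism type of each $S_i$ is left untouched.

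The modification is the following. Fix two small disjoint closed disks $D_1,D_2\subset\Sigma$ and let $\Sigma^{\#}$ be the compact hyperbolic surface obtained from $\Sigma$ by removing $\Intr D_1\cup\Intr D_2$ and gluing in a tube joining $\partial D_1$ to $\partial D_2$; thus $\Sigma^{\#}\cong\Sigma\# T^2$. Let $p\colon\Sigma^{\#}\to\Sigma$ be the degree--one map that collapses the extra handle, so that $p_*\colon\pi_1(\Sigma^{\#})\to\pi_1(\Sigma)$ is surjective, and set $\rho^{\#}:=\rho\circ p_*\colon\pi_1(\Sigma^{\#})\to\Homeo(C)$. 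Since $p_*$ is onto, $\rho^{\#}$ has the same orbits on $C$ as $\rho$, hence is minimal; its suspension $\cL^{\#}$ over $\Sigma^{\#}$ is therefore a minimal lamination on a compact space, it is again a suspension of homeomorphisms of the Cantor set $C$, and pulling back the constant--curvature metric of $\Sigma^{\#}$ to the leaves (each of which is a covering of $\Sigma^{\#}$) makes it hyperbolic with a transversally continuous leafwise metric. It remains to identify the leaves of $\cL^{\#}$.

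The leaf of $\cL^{\#}$ over a point $c\in C$ is the covering of $\Sigma^{\#}$ determined by $p_*^{-1}(\Stab_\rho(c))$, i.e.\ the pullback by $p$ of the leaf $L_c=\HH^2/\Stab_\rho(c)$ of $\cL$; concretely it is $L_c$ with a handle inserted at each point of the fibre of the grafting locus, a discrete subset of $L_c$. Because $\Sigma$ is compact, this fibre is an $R$--net in $L_c$ for some $R$ (lift to $L_c$ a path of length $<R$ from any point to the basepoint), so the inserted handles accumulate to every end of $L_c$; moreover inserting handles inside small balls changes neither the space of ends nor the set of ends not accumulated by genus, and only adds genus. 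Consequently: if $c$ lies in the residual set on which $L_c$ is the Cantor tree, then the corresponding leaf of $\cL^{\#}$ has a Cantor set of ends, infinite genus, and every end accumulated by genus, hence is the Cantor tree with handles by Ker\'ekj\'art\'o's classification; this identifies the generic leaf of $\cL^{\#}$. And for each $i$ the leaf of $\cL^{\#}$ over $c_i$ has the same space of ends as $S_i$, infinite genus, and all its ends accumulated by genus --- the last point holding precisely because $S_i$ already satisfies $(\star\star)$, so the freshly inserted handles do not alter the genus--accumulation locus --- whence it is homeomorphic to $S_i$, again by Ker\'ekj\'art\'o. Since Theorem~\ref{t:Alvarez-Brum} allows arbitrary (possibly uncountable) prescriptions, so does the construction above.

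The point that requires care is the last paragraph: one must verify that the grafting is ``seen'' by the generic leaf, turning the Cantor tree into the Cantor tree with handles --- this is where compactness of $\Sigma$ and the net argument enter --- while being ``invisible'' to each prescribed leaf up to homeomorphism, which is exactly the place where Condition $(\star\star)$, rather than the weaker $(\star)$, is indispensable. A secondary, more bookkeeping, issue is to make sure the suspension furnished by Theorem~\ref{t:Alvarez-Brum} really has a compact hyperbolic surface as base; if one only has a suspension over a finite--volume surface with cusps, one first replaces it by a suspension over a compact hyperbolic surface with geodesic boundary (truncating the cusps), which changes nothing in the argument.
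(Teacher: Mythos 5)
Your proposal is correct and takes essentially the same approach as the paper's one-line proof: the paper's ``surgery along a transverse fiber, changing a disk by a handle at each point of that fiber'' is exactly your passage to the suspension over $\Sigma\# T^2$ via the collapsing map $p_*$. You correctly supply the details the paper leaves implicit (minimality of $\rho\circ p_*$, the net argument forcing genus to accumulate at every end, and why Condition $(\star\star)$ makes the prescribed leaves invariant under this grafting).
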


This Corollary follows from Theorem~\ref{t:Alvarez-Brum} by means of a surgery along a transverse fiber, changing a disk by a handle at each point of that fiber. 

%Notice that the Proposition 2.2 above and  this two results show the precise obstructions to be a leaf of a minimal lamination by hyperbolic surfaces whose generic leaf has a Cantor set of ends.

In  \cite[Theor\`eme 3]{Blanc} Blanc gives a complete description of the open surfaces that can be realized as leaves of  minimal laminations where the generic leaf has two ends: all leaves have one or two ends. If in addition the foliation is hyperbolic then  Proposition~\ref{p:systolic loop} implies that the generic leaf is the Jacob's ladder and the unique topologies that can occur as leaves are the Jacob's ladder and the Loch-Ness monster.  

The Table~\ref{tb:table_1} (depicted also in \cite{AB}) gives the necessary conditions for the topology of every leaf of a minimal hyperbolic lamination  depending on the topology of the generic leaf. Recall that hyperbolicity implies that all the leaves must be oriented.

\begin{table}[h!]
\begin{center}
\begin{tabular}{ | m{4cm} | m{6cm}| } %
  \hline
  \textbf{Generic Leaf} & \textbf{Leaf Topology}  \\ 
  \hline
  Plane & No Conditions  \\ 
  \hline
  Cantor Tree & Condition $(\star)$  \\ 
  \hline
 Loch Ness Monster or Cantor Tree  with handles & Condition $(\star\star)$ \\
  \hline
  Jacob's Ladder & Jacob's Ladder or Loch Ness Monster\\
  \hline
\end{tabular}
\caption{}
\label{tb:table_1}
\end{center}
\end{table}

We studied the first case of Table~\ref{tb:table_1} in  \cite{Gusmao-Menino} showing that every countable family of noncompact oriented surfaces can be realized as leaves of a codimension one minimal hyperbolic foliation on some closed $3$-manifolds.

In this work we deal with the intermediate cases of Table~\ref{tb:table_1}, these can also be realized as leaves of some codimension one minimal hyperbolic foliations on some closed $3$-manifolds, this is summarized in the following theorems.

\begin{thmA}  \label{t:thm1}
Let $\{S_n\}_{n\in\mathbb N}$ be a countable family of noncompact oriented surfaces satisfying the Condition $(\star)$. There exists a   transversely $C^\infty$ minimal hyperbolic foliation $\FF$ of a closed $3$-manifold whose generic leaf is a Cantor tree and such that for each $n\in \mathbb N$ there exists a leaf in $\FF$ homeomorphic to $S_n$.
\end{thmA}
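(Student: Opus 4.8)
The plan is to realize the prescribed family $\{S_n\}$ as leaves by building a minimal hyperbolic foliation on a closed $3$-manifold via a suspension-type construction combined with the existence of convenient Kleinian (Fuchsian-with-handles) building blocks. The starting point is the lamination produced by Theorem~\ref{t:Alvarez-Brum} of \'Alvarez--Brum: applied to the countable family $\{S_n\}$ (which satisfies $(\star)$), it yields a minimal hyperbolic lamination $\mathcal L$ on a compact space, obtained as the suspension of a finitely generated group $\Gamma$ of homeomorphisms of a Cantor set $K$, whose generic leaf is the Cantor tree and containing each $S_n$ as a leaf. The obstacle is that this is only a lamination — the transverse space is a Cantor set, not an interval — so I must fatten $K$ to a closed interval (or a closed surface transversal) and extend the dynamics to obtain a genuine foliation of a closed $3$-manifold while preserving minimality, hyperbolicity, and the leaf topology. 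This is the same kind of passage carried out by the authors in \cite{Gusmao-Menino} for the planar case, and the first step is to adapt that machinery to the present setting.

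Concretely, the key steps in order are as follows. First, I would describe $\mathcal L$ as a foliated bundle: $\Gamma = \pi_1(B)$ for a suitable closed surface (or graph-of-groups) base $B$, acting on $K$, with holonomy representation $\rho\colon\Gamma\to\Homeo(K)$, and with each leaf a hyperbolic surface covering a fixed hyperbolic base orbifold. Second, I would thicken the Cantor set: replace $K$ by a closed interval $I$ (or by a closed disk), filling the gaps of $K$ by ``trivial'' product regions, and extend the representation $\rho$ to $\bar\rho\colon\Gamma\to\Homeo^+(I)$ so that $K$ remains a minimal set and the new leaves created in the gaps are homeomorphic either to the generic Cantor tree or to surfaces still satisfying $(\star)$ — the condition $(\star)$ being exactly what guarantees these filler leaves are admissible. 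Third, to make the ambient space a \emph{closed} $3$-manifold one must cap off the boundary leaves $I\times\{0\},\,I\times\{1\}$: here I would either arrange (by choice of $B$ and $\bar\rho$) that the boundary leaves bound, or perform Dippolito-type spinning/blow-down, or — following the Seifert-fibered strategy of \cite{Gusmao-Menino} — choose $B$ so that the total space is already closed and the boundary components are glued to each other by an element of $\Gamma$. Fourth, I would verify minimality: every leaf must be dense, which follows from minimality of the $\Gamma$-action on the closure of $K$ in $I$ together with density of $K$-leaves inside $\mathcal L$; and I would transfer the leafwise hyperbolic metric from the suspension structure, using that the foliation is a foliated bundle over a hyperbolic base, so every leaf inherits a complete constant-curvature $-1$ metric, and that the metric varies transversely $C^\infty$ by construction. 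Fifth, I would pin down the leaf topologies: the original leaves $S_n$ survive unchanged, the generic leaf is still the Cantor tree (the generic point of $I$ lies in $\bar K$ and its leaf is unaffected by the filling), and any newly introduced leaves are checked to satisfy $(\star)$, hence cause no contradiction with Table~\ref{tb:table_1}.

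The main obstacle I anticipate is Step three together with the simultaneous control required in Steps four and five: extending the Cantor-set dynamics to an interval while (i) keeping $\bar K$ minimal, (ii) producing a \emph{closed} ambient manifold, and (iii) not destroying hyperbolicity or introducing a leaf violating $(\star)$. The tension is that filling gaps naively tends to create isolated planar ends (a plane leaf in a gap), which would contradict Proposition~\ref{p:systolic loop}; the resolution is to fill each gap not with a product but with a family of surfaces that are built from the Cantor-tree model by a genus-insertion or end-splitting surgery along the transverse direction, so that every filler leaf inherits a nonplanar structure at its relevant ends. Once the foliated-bundle picture is set up, the hyperbolic metric and the transverse $C^\infty$ regularity come essentially for free from \cite{Candel1} and the suspension construction, so the real work is combinatorial-topological: engineering the extended holonomy on $I$ and the closing-up of the manifold. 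I would organize the write-up as: (1) recollection of the \'Alvarez--Brum lamination and its suspension description; (2) the gap-filling surgery and the extended representation; (3) closing up to a closed $3$-manifold; (4) verification of minimality, hyperbolicity, and leaf inventory.
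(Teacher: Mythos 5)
Your proposal takes a genuinely different route from the paper, and it contains a gap that is fatal at its core rather than merely technical. You propose to start from the \'Alvarez--Brum lamination, realized as a suspension of a group $\Gamma$ acting on a Cantor set $K$, and to thicken $K$ to an interval $I$ while ``extending $\rho$ to $\bar\rho\colon\Gamma\to\Homeo(I)$ so that $K$ remains a minimal set.'' But if $K$ remains a \emph{proper} closed invariant subset of the transversal, its saturation is a proper closed saturated subset of the resulting $3$-manifold: the leaves through $K$ --- which are exactly the leaves carrying the prescribed topologies $S_n$ --- lie in an exceptional minimal set and are therefore not dense in the ambient manifold. Hence the foliation you obtain is never minimal, which is precisely what Theorem~\ref{t:thm1} asserts. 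The only escape is to destroy the invariance of $K$ (a Denjoy-type blow-down/semiconjugacy), but that merges gap leaves with leaves of the lamination and you lose control of the leaf topology; you would also still need to upgrade an action by homeomorphisms of a Cantor set to a transversely $C^\infty$ action and close up the $I$-bundle, neither of which is resolved beyond naming candidate strategies. Your own ``main obstacle'' paragraph identifies the tension but the proposed resolutions do not remove it.

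The paper's proof never passes through a lamination with an invariant Cantor set. It builds two \emph{foliated blocks} by removing tubular neighborhoods of closed transversals from minimal hyperbolic foliations with planar generic leaf; each block is a minimal foliated compact $3$-manifold whose boundary is a torus foliated by circles and whose generic leaf is $\Sigma$ (a plane minus countably many disks). Proposition~\ref{p:surface construction} decomposes any surface satisfying $(\star)$ into copies of $\Sigma$ glued along boundary circles according to a bijection $\delta$, and the heart of the proof is the inductive construction, via the Shiga-type approximation Lemma~\ref{l:Cinfty}, of a $C^\infty$ circle diffeomorphism $f$ between the two transverse circles realizing $\delta$ on the traces of countably many prescribed leaves. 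Gluing the blocks by $\id\times f$ gives a closed manifold; minimality, hyperbolicity and transverse smoothness are inherited from the blocks, and the generic leaf is forced to be the Cantor tree by adding one to the prescribed list. In short, the paper arranges for \emph{every} transverse orbit to be dense by construction, which is exactly what the thickened-Cantor-set picture cannot deliver.
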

  
\begin{thmA}\label{t:thm2}
Let $\{S_n\}_{n\in\mathbb N}$ be a countable family of noncompact oriented surfaces satisfying the Condition $(\star \star)$. There exist hyperbolic foliations $\FF_1$, $\FF_2$ on (possibly different) closed $3$-manifolds whose generic leaf is the Cantor tree with handles and the Loch Ness monster (respectively) and such that for each $n\in \mathbb N$ there exists a leaf in $\FF_i$ homeomorphic to $S_n$, for $i\in\{1,2\}$. The transverse regularity of $\FF_1$ is  $C^\infty$ and $\FF_2$ is just Lipschitz.
\end{thmA}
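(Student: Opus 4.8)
The plan is to obtain both foliations from the Cantor-tree foliation of Theorem~\ref{t:thm1} by performing controlled \emph{handle surgeries} along a transversal. Condition $(\star\star)$ implies Condition $(\star)$ (an end accumulated by genus is nonplanar), so Theorem~\ref{t:thm1} provides a minimal, transversely $C^\infty$ hyperbolic foliation $\FF$ of a closed $3$-manifold $M$ realizing the whole family $\{S_n\}$, with generic leaf the Cantor tree. I would first fix, from that construction, a complete transversal $T\subset M$ homeomorphic to a Cantor set, arranged so that (i) for every leaf $L$ without holonomy, $L\cap T$ is discrete in $L$ and accumulates onto every end of $L$ --- automatic for the Cantor-tree leaves, whose end space is modelled on $T$ --- and (ii) every leaf homeomorphic to some $S_n$ meets $T$ in a discrete subset of that leaf.

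\emph{Construction of $\FF_1$.} Perform the standard disk-to-handle surgery fibered over $T$: delete from $M$ a foliated family of leafwise disks $\{D_t\}_{t\in T}$ and glue back a foliated family of once-punctured tori $\{H_t\}_{t\in T}$, matching $\FF$ along a collar. Over the Cantor set this is a $C^\infty$ operation, it is supported near $T$, and it preserves density of leaves, so the resulting foliation $\FF_1$ is again minimal and transversely $C^\infty$; it is hyperbolic for the same reason as $\FF$ (its leaves are conformally hyperbolic and there is no transverse invariant measure, so Candel's theorem \cite{Candel1} applies). By construction every leaf $L$ acquires a handle at each point of the discrete set $L\cap T$. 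For a Cantor-tree leaf these handles accumulate onto every end, so the leaf becomes a Cantor tree with handles; a leaf homeomorphic to some $S_n$ stays homeomorphic to $S_n$, since by $(\star\star)$ it already has infinite genus with every end accumulated by genus, and adding genus along a discrete set creates no new end and changes no end's type, so by the classification of open orientable surfaces its homeomorphism type is unchanged. Thus $\FF_1$ has generic leaf the Cantor tree with handles and realizes each $S_n$.

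\emph{Construction of $\FF_2$.} Now the generic leaf must be one-ended, so the Cantor set of ends produced by the previous surgery has to be telescoped into a single nonplanar end. The plan is to iterate the surgery along the defining clopen decompositions $T=\bigsqcup_i T^{(k)}_i$ of the Cantor set: at the $k$-th stage join consecutive leafwise pieces lying over the $T^{(k)}_i$ by foliated tubes in a Jacob's-ladder pattern, arranged so that on each leaf the family of ends collapses, in the limit, to exactly one. Equivalently, one re-assembles the building blocks of the construction in a telescoping pattern modelled on the one-ended surface obtained from $\R^2$ by attaching infinitely many handles. Minimality and conformal hyperbolicity of the leaves are preserved, so $\FF_2$ is again a hyperbolic foliation; leaves homeomorphic to some $S_n$ remain so for the reason above; but the infinite gluing converges only in the $C^0$/Lipschitz category in the transverse direction, which is precisely why $\FF_2$ is only transversely Lipschitz.

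The main obstacle is the construction of $\FF_2$: one has to run the end-collapsing tube surgery so that (i) minimality is not destroyed, (ii) the leaves stay conformally hyperbolic (equivalently, so that a leafwise metric of curvature $-1$ survives) with at least Lipschitz transverse regularity, and (iii) the new generic leaf is exactly the Loch Ness monster --- one end, infinite genus --- and does not instead acquire a fresh Cantor set of ends from the tubing pattern. A smaller but real point, already for $\FF_1$, is to check that the surgery keeps the transverse structure $C^\infty$ across the surgery locus.
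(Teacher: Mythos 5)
Your overall strategy for $\FF_1$---handle surgery on the foliation of Theorem~\ref{t:thm1}---matches the paper's, but the surgery you describe is set up incorrectly for the \emph{foliation} (as opposed to \emph{lamination}) context. Deleting a family of leafwise disks $\{D_t\}_{t\in T}$ fibered over a Cantor transversal $T$ and regluing $\{H_t\}_{t\in T}$ does not give a surgery on a closed $3$-manifold: $D^2\times T$ is not a codimension-zero submanifold when $T$ is a Cantor set. What you want is to pick a closed \emph{transverse circle} $\gamma$, remove the interior of a tubular neighborhood $D^2\times S^1$ of $\gamma$, and glue in $H\times S^1$ (with $H$ a once-punctured torus) foliated by the slices $H\times\{\ast\}$, matching the trace foliations on the boundary tori. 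By minimality $\gamma$ meets every leaf in a countable set accumulating on every end, so the effect on each leaf is exactly the one you intend, and the result is a closed $3$-manifold carrying a transversely $C^\infty$ minimal hyperbolic foliation. This is the paper's move; your version is the right picture for laminations (as in the Corollary to Theorem~\ref{t:Alvarez-Brum}) but not for codimension-one foliations of $3$-manifolds.

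For $\FF_2$ you take a genuinely different route, and it is where the real gap lies. You propose to start from $\FF_1$ (generic leaf a Cantor tree with handles, hence with a Cantor set of ends) and telescope the ends into a single one by an iterated tubing construction. You rightly flag this as the main obstacle: collapsing a Cantor set of ends while keeping the ambient space a closed $3$-manifold and the foliation minimal and hyperbolic is not a standard operation, and no mechanism is given for why the limit leaf should have exactly one end rather than a new Cantor set of ends. The paper avoids this entirely by changing the \emph{starting} foliation: by \cite[Theorem 1]{Gusmao-Menino} there is a (transversely bi-Lipschitz) minimal foliation on a closed Seifert $3$-manifold whose generic leaf is the \emph{plane} and which already realizes every $S_n$ as a leaf. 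Applying the same transverse-circle handle surgery to that foliation turns the plane (one end) into the Loch Ness monster (one end, infinite genus), while every $S_n$ is unchanged because by $(\star\star)$ its ends are already accumulated by genus. This also explains the Lipschitz regularity of $\FF_2$: it is inherited from the base foliation of \cite{Gusmao-Menino}, which cannot be made $C^2$, not from any loss of regularity in a telescoping limit as you suggest.
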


The foliations given in Theorem~\ref{t:thm1} are obtained via surgery on minimal foliated manifolds. These are obtained just by removing tubular neighborhoods from transverse circles of two minimal foliations whose generic leaves are planes and then gluing boundaries with an appropriate homeomorphism. The point here is the definition of the gluing map in order to control the topology of countably many prescribed leaves as well as the generic leaf. The topology of the ambient $3$-manifold can be controlled and includes several of the cases studied in \cite{ADMV}, more precisely, some Seifert and graph manifolds admit foliations as those described in Theorems~\ref{t:thm1} and \ref{t:thm2}. Since our construction produces an incompressible torus, the ambient manifold is never hyperbolic. Theorem~\ref{t:thm2} is a corollary of Theorem~\ref{t:thm1} and our work \cite{Gusmao-Menino}.

There are lots of examples of minimal foliations on closed $3$-manifolds whose generic leaf is a plane, for instance: minimal Kr\"onecker foliations of $T^3$ obtained by suspension of rationally independent rotations, the center-stable foliation of a transitive Anosov flow, the foliation given by a $C^2$ locally free  action of affine group on a $3$-manifold \cite{Ghys3}, the supension of a minimal action of a surface group over $\Homeo_+(S^1)$  with maximal Euler class \cite{Ghys4}. The generic leaves of the examples contructed in \cite{Gusmao-Menino} are also planes.

The last case of Table~\ref{tb:table_1} is handled in \cite[Section 2]{Blanc}, i.e., there exists a minimal hyperbolic foliation on some closed $3$-manifold whose generic leaf is the Jacob's ladder and there exist leaves homeomorphic to the Loch Ness monster. Blanc's example produces exactly $4$ nongeneric leaves, it is unknown for us if it is possible to produce a similar example with infinitely many nongeneric leaves.

The paper is organized as follows.

\begin{itemize}

\item In the first section we describe how to realize any noncompact oriented surface satisfying the condition $(\star)$ as an inductive limit of suitable boundary gluings between surfaces homeomorphic to a plane punctured along countably many disks.

\item In the second section we introduce the so called {\em Foliated blocks} and study the gluing maps between them. The leaves of the resulting foliation can be seen as inductive limits as the described in the first section and the aim is to define a suitable gluing map  that controls the topology of countably many leaves.

\item In the third section we prove Theorems~\ref{t:thm1} and \ref{t:thm2} and discuss the topology of the ambient $3$-manifolds where these Theorems hold.
\end{itemize}

%\begin{ack}
%Second author wants to thank MathAmSud: ``Rigidity and Geometric Structures in Dynamics'' 2019-2020 (CAPES-Brazil), the CNPq research grant 310915/2019-8 and the ``Programa Estatal de
%Generación de Conocimiento y Fortalecimiento Científico y Tecnológico'' PID2020-114474GB-I00 (Ministerio de Ciencia e Innovaci\'on, Spain) that partially supported this research.
%\end{ack}

\section{Realization of surfaces satisfying the condition $(\star)$}

Let $S$ be an noncompact connected surface (without boundary) and let $\{K_i\}_{i\in\N}$ be a compact filtration of $S$, i.e., $K_i\subset K_{i+1}$ for all $i\in\N$ and $\bigcup_i K_i = S$. 

A system of neighborhoods of an {\em end} of $S$ is a family $U_1\supset U_2\supset\cdots \supset U_n\cdots$ of connected open sets such that each $U_i$ is a connected component of the complement $K_i$. An {\em end} is an equivalence class of neighborhood systems (relative to possibly different filtrations): two systems are equivalent if and only if the interesection between every pair of neighborhoods in each system is nonempty.

The space of ends will be denoted by ${\cE}(S)$ and admits a topology of a compact, metrizable and totally disconnected space.

An end $e$ is said to be nonplanar if none of its fundamental neighborhoods
$U_i$ is homeomorphic to an open subset of $\R^2$. The set of nonplanar ends is a closed subspace $\cE^*(S)$ of $\cE(S)$. When $S$ is orientable, the classification theorems of B. Kerékjártó \cite{kerekjarto} and I. Richards \cite{Richards} can be stated as follows.

\begin{prop}\label{p:classification} \cite{kerekjarto, Richards}
Let $S$ be an oriented noncompact oriented surface. If $\cE^*(S)\ne\emptyset$, then the homeomorphism type of the pair $(\cE(S)$, $\cE^*(S))$ determines $S$ up to diffeomorphism.
If $\cE^*(S)=\emptyset$, then the genus of $S$ together with the homeomorphism type of $\cE(S)$ determine $S$ up to diffeomorphism. Finally, every topological pair $(E, E^*)$ such that $E$ is a compact, totally disconnected, metrizable space and $E^*$, a closed subspace, occurs as $(\cE(S), \cE^*(S))$ for some noncompact oriented surface $S$. If $\cE^*(S)=\emptyset$, any preassigned integer can be realized as genus($S$).
\end{prop}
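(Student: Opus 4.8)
The plan is to establish the homeomorphism classification; the passage from ``homeomorphic'' to ``diffeomorphic'' is then automatic in dimension two, since every topological surface carries a smooth structure that is unique up to diffeomorphism and every homeomorphism between smooth surfaces is isotopic to a diffeomorphism. I would split the argument into three steps: a normal form for compact exhaustions, a uniqueness step, and a realization step.

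\emph{Normal form.} First I would show that every noncompact oriented surface $S$ admits a compact exhaustion $P_1\subset P_2\subset\cdots$ by connected subsurfaces with boundary such that $P_i\subset\Intr P_{i+1}$; every connected component of $S\setminus\Intr P_i$ is noncompact with a single boundary circle; no component of $\overline{P_{i+1}\setminus P_i}$ is an annulus with both boundary circles in $\partial P_i$; and each piece $\overline{P_{i+1}\setminus P_i}\cap C$, for $C$ a component of $S\setminus\Intr P_i$, either branches once (a pair of pants), or deposits one unit of genus (a genus‑one surface with two boundary circles), or does neither (an annulus). Starting from an arbitrary exhaustion, this is obtained by filling in the relatively compact complementary pieces, tubing together the others, and subdividing. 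The components of $S\setminus\Intr P_i$, ordered by inclusion, then form an inverse system of finite sets whose inverse limit is $\cE(S)$, a branch of this tree converges to a nonplanar end precisely when the genus of the genus‑one pieces it meets sums to infinity, and $\operatorname{genus}(S)$ (finite case) is the total number of such pieces. Thus the combinatorial data encodes exactly $(\cE(S),\cE^*(S))$, and $\operatorname{genus}(S)$ when $\cE^*(S)=\emptyset$.

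\emph{Uniqueness.} Suppose $h\colon(\cE(S_1),\cE^*(S_1))\to(\cE(S_2),\cE^*(S_2))$ is a homeomorphism of pairs, and $\operatorname{genus}(S_1)=\operatorname{genus}(S_2)$ when these end sets are all planar. A homeomorphism of compact totally disconnected metrizable spaces refines, after interleaving, to an isomorphism of two defining inverse systems of finite clopen partitions; so after re‑indexing the two normal exhaustions along a common refinement I may assume their transition trees agree and that $h$ matches their branches. I would then build $S_1\to S_2$ by a back‑and‑forth induction, extending a homeomorphism $P_i^{(1)}\to P_i^{(2)}$ over each transition piece, using that two compact oriented surfaces with equal numbers of boundary circles and equal genus are homeomorphic by a map realizing a prescribed bijection of their boundary circles (classification of compact surfaces). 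Along a branch ending at a nonplanar end both sides accumulate infinite genus, so a greedy matching of the genus‑one pieces succeeds; along a branch ending at a planar end the accumulated genus is finite and equal on the two sides, so the finitely many handles are absorbed at a common finite stage. The composition of these extensions is the desired homeomorphism.

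\emph{Realization, and the main obstacle.} Every compact totally disconnected metrizable $E$ embeds as a closed subset of a Cantor set $\mathcal C\subset S^2$, and then $S^2\setminus E$ is a noncompact oriented surface with end space homeomorphic to $E$ and all ends planar. Given a closed $E^*\subseteq E$, choose a decreasing sequence of finite clopen partitions of $E$ and, for each partition block meeting $E^*$, attach a handle (connected sum with a torus) inside the corresponding collar of $S^2\setminus E$, along a null sequence of pairwise disjoint disks; since $E^*$ is closed one can arrange the handles to accumulate at $x\in E$ if and only if $x\in E^*$, producing end pair $(E,E^*)$. If $E^*=\emptyset$ and a finite genus $g$ is prescribed, attach exactly $g$ handles inside a fixed disk. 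The hard part, I expect, is the bookkeeping at the interface of these steps: in the realization, placing the handles so that genus accumulates on \emph{exactly} $E^*$ and on no planar end while keeping the surface in the normal form above; and in the uniqueness step, running the back‑and‑forth so that the combinatorial matching of the two end inverse systems and the branch‑by‑branch matching of accumulated genus are maintained simultaneously, never exhausting a branch before its genus has been accounted for. Once these are handled, the classification of compact bordered surfaces does the rest.
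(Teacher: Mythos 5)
The paper does not prove this proposition; it is quoted as the classical Ker\'ekj\'art\'o--Richards classification of noncompact surfaces, with a citation in place of a proof. Your outline is essentially Richards' original argument (canonical exhaustions, matching of the inverse systems of complementary components by a back-and-forth, realization by deleting a closed subset of a Cantor set in $S^2$ and attaching a null sequence of handles accumulating exactly on $E^*$), and it is sound as a sketch, including the smoothing step reducing ``homeomorphic'' to ``diffeomorphic'' in dimension two. The bookkeeping issues you flag are the genuine ones, and they are resolved exactly as you indicate: finite amounts of genus on a branch can be slid back through the exhaustion and either absorbed at a finite stage (when $\cE^*=\emptyset$, so only the total genus matters) or pushed toward a nonplanar branch.
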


  Let $\Sigma$ be the surface obtained by removing from $\R^2$ the interior of a proper union of countably many pairwise disjoint closed disks, here proper means that any bounded set meets finitely many disks of the family. The topology of $\Sigma$ does not depend on the chosen family of disks. The boundary components of $\Sigma$ will be enumerated as $B_i$, $i\in\N$ (see Figure~\ref{f:figure_1}).
  
\begin{figure}
\begin{center}
\includegraphics[scale=0.50]{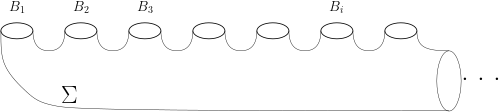}
\end{center}
\caption{}
\label{f:figure_1}
\end{figure}

In order to prove Theorem~\ref{t:thm1}, we need to obtain the topology of any noncompact oriented surface (without boundary) satisfying the Condition $(\star)$ from boundary unions of surfaces homeomorphic to $\Sigma$. 

\begin{defn}\label{d:left-right}
Let $\cL^\triangleright=\{L^\triangleright_n\mid\ n\in I^\triangleright\}$ and $\cL^\triangleleft=\{L^\triangleleft_n\mid\ n\in I^\triangleleft\}$ be two disjoint countable (possibly finite\footnote{This means that $I^\ast$ can refer to $\N$ or the set $\{1,\dots,n\}$ for some $n\in\N$.}) families of surfaces homeomorphic to $\Sigma$. Let $\{B^\ast_{n,k}\mid\ k\in\N\}$ be a enumeration of the boundary components of each $L^\ast_n$, $n\in\N$, for $\ast\in\{\triangleright,\triangleleft\}$.

Let $\delta:I^\triangleright\times\N\to I^\triangleleft\times\N$ be any bijection and let $g_{i,j}:B^\triangleright_{i,j}\to B^\triangleleft_{\delta(i,j)}$ be reversing orientation homeomorphisms. Let $S_\delta$ be the orientable open surface without boundary obtained as the quotient of $\bigsqcup_n L^\triangleright_n\sqcup \bigsqcup_m L^\triangleleft_m$ by the gluing maps $g_{i,j}$ between the boundary components of the surfaces in $\cL^\triangleright$ and $\cL^\triangleleft$.
\end{defn}

\begin{prop}\label{p:surface construction}
Let $S$ be a noncompact oriented surface satisfiyng the Condition $(\star)$, there exist two countable families $\cL^\triangleright=\{L^\triangleright_n\mid\ n\in I^\triangleright\}$ and $\cL^\triangleleft=\{L^\triangleleft_n\mid\ n\in I^\triangleleft\}$  of surfaces homeomorphic (preserving orientation) to $\Sigma$ and a bijection $\delta:I^\triangleright\times\N\to I^\triangleleft\times\N$ such that $S_\delta$ is homeomorphic to $S$.
\end{prop}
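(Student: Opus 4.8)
The plan is to realize $S$ as an inductive limit of compact pieces, each of which is built from finitely many copies of $\Sigma$ glued along boundary circles, so that the gluing data can be repackaged as the bijection $\delta$. First I would fix a compact exhaustion $K_1 \subset K_2 \subset \cdots$ of $S$ chosen so that each $K_{i+1} \setminus \operatorname{int} K_i$ is a disjoint union of compact surfaces with boundary, and so that the ends and the nonplanar ends of $S$ are detected by the nested neighborhoods $U_i$. The key structural observation is that, by the condition $(\star)$, any \emph{isolated} end is nonplanar; hence an isolated end is an end of Loch-Ness-monster type (a one-ended surface with infinite genus) or carries infinite genus in every neighborhood, whereas a non-isolated end is a limit of other ends and so its neighborhoods necessarily contain infinitely many boundary circles that get "filled in" by pieces at later stages. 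In either case the relevant local models — a plane with infinitely many handles, or a piece accumulating infinitely many smaller ends — are exactly the kind of object that can be assembled from countably many copies of $\Sigma$ glued along boundaries: a handle is produced by gluing two boundary circles of one $\Sigma$-piece to two boundary circles of another (creating genus), and a bifurcating/accumulating end is produced by continuing to attach $\Sigma$-pieces along some of the free boundary circles indefinitely.

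Concretely, I would set up a bookkeeping scheme: think of a rooted tree $\cT$ whose vertices are the $\Sigma$-pieces to be used and whose edges record which boundary circle of one piece is glued to which boundary circle of another. Colour the vertices alternately "$\triangleright$" and "$\triangleleft$" according to the distance from the root modulo $2$; because $\Sigma$ has infinitely many boundary components and only countably many pieces are needed, this two-colouring can be arranged so that every gluing is between a $\triangleright$-piece and a $\triangleleft$-piece, which is precisely the bipartite structure demanded by Definition~\ref{d:left-right}. The families $\cL^\triangleright$, $\cL^\triangleleft$ are then the two colour classes, and the edge set of $\cT$, together with the chosen identifications of boundary circles, defines a partial matching between $\bigsqcup_n \{B^\triangleright_{n,k}\} $ and $\bigsqcup_n\{B^\triangleleft_{n,k}\}$; since $\Sigma$ has infinitely many boundary circles in each piece, any unmatched circles can be paired off arbitrarily (by attaching further trivial $\Sigma$-pieces along them, or by a Hilbert-hotel argument on the free circles) to upgrade the partial matching to an honest bijection $\delta: I^\triangleright\times\N \to I^\triangleleft\times\N$ without changing the homeomorphism type of the quotient, because attaching a $\Sigma$-piece along a single circle does not alter the surface up to homeomorphism — it only refines the compact exhaustion. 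The surface $S_\delta$ obtained this way is then, by construction, the increasing union of the finite subcomplexes of $\cT$, i.e., a compact exhaustion whose $n$-th term I can match with $K_n$.

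To finish I would invoke the classification of Proposition~\ref{p:classification}: it suffices to check that $S_\delta$ is a noncompact oriented surface without boundary (clear, since all boundary circles are glued in orientation-reversing pairs and the union is infinite), and that $(\cE(S_\delta), \cE^*(S_\delta))$ is homeomorphic to $(\cE(S), \cE^*(S))$ — or, in the genus-zero-end case, that the genus matches. This is where the care in choosing $\cT$ pays off: the ends of $S_\delta$ correspond to the ends of the tree $\cT$ together with the ends "inside" individual pieces, and one reads off from the construction that an isolated end of $S$ (necessarily nonplanar) corresponds to a branch of $\cT$ along which infinitely many genus-producing gluings occur, while each non-isolated end of $S$ corresponds to an accumulation of branches; running this correspondence backwards lets me pick $\cT$ so that the end spaces agree. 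I expect the main obstacle to be precisely this last verification — organizing the tree $\cT$ (equivalently, the order in which pieces and handles are introduced) so that both the topology of the end space \emph{and} the nonplanar locus come out exactly right, especially handling ends that are non-isolated but planar (which are allowed by $(\star)$) simultaneously with nonplanar isolated ends; once the correspondence between "branches of $\cT$ carrying infinite genus" and "$\cE^*(S)$" is pinned down, the rest is bookkeeping.
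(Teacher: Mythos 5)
Your overall plan---encode $S$ by a tree with a vertex coloring, assemble $S$ from $\Sigma$-pieces whose boundary identifications reflect the tree, and invoke the classification of Proposition~\ref{p:classification}---is the right skeleton and matches the paper's strategy. However, the step that carries the actual weight is wrong. You assign one copy of $\Sigma$ to each \emph{vertex} of your tree $\cT$ and two-colour by parity of level. At a vertex of degree $d\leq 3$ only finitely many (roughly $d$, plus one or two for handles) of the infinitely many boundary circles of that vertex's $\Sigma$-piece are consumed by tree edges, and you must do something with the rest. You propose to ``pair off'' the leftover circles or to ``attach further trivial $\Sigma$-pieces along them'', asserting that attaching a $\Sigma$-piece along a single circle does not alter the surface up to homeomorphism. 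This assertion is false: $\Sigma$ is not a disk; it has countably many further boundary circles and an end of its own, so gluing it along one circle introduces new boundary and a new end, while pairing two leftover circles of already-attached pieces always adds genus (there is already a path in the surface joining any two attached pieces). Concretely, your scheme fails already for the Cantor tree: there $\cT$ is the full binary tree with coloring identically $0$, every vertex has degree at most $3$, so after the tree gluings each vertex piece still has infinitely many dangling circles, and every way of absorbing them creates unwanted genus and/or extra ends; your construction would output a Cantor tree with handles, not a Cantor tree. The same defect would also spoil any surface with a planar non-isolated end, which Condition $(\star)$ explicitly allows.

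The paper's resolution is structurally different and is the key idea you are missing. Rather than one $\Sigma$-piece per vertex, the paper assigns one $\Sigma$-piece per \emph{branch} (infinite linear subtree) of $\cT$ as partitioned in Definition~\ref{d:decomposition}, together with one extra auxiliary $\Sigma$-piece $L(\mathbf{e})$ for each isolated end $\mathbf{e}$. As the level-by-level induction proceeds along a given branch, boundary circles of the piece assigned to that branch are consumed one or two at a time: one at each degree-$3$ vertex to start the new branch there, one more at each color-$1$ vertex to close a handle, and, along a branch heading to an isolated end, the handle is closed against the auxiliary piece $L(\mathbf{e})$. Condition $(\star)$ is then exactly what guarantees termination of the bookkeeping: every branch contains infinitely many degree-$3$ vertices or infinitely many color-$1$ vertices, so the induction never stalls and every boundary circle of every piece is eventually matched. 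The $\triangleright/\triangleleft$ partition then falls out of the left-sided/right-sided distinction between branches rather than from parity of vertex level, which is what makes $\delta$ a genuine bijection from $I^\triangleright\times\N$ to $I^\triangleleft\times\N$.
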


We shall codify the topology of $S$ in a suitable way (this is essentially the same codification as the used in \cite{Gusmao-Menino}).

\begin{defn}\label{n:binary tree}
It is well known that the binary tree has a Cantor set of ends and every compact, metrizable and totally disconnected space can be embedded in a Cantor set. Every closed subset of the Cantor set can be obtained as the space of ends of a connected subtree $\mathcal{T}$ of the binary tree. Let $V$ and $E$ be the sets of vertices and edges, respectively, of $\mathcal{T}$ and let $\nu: V\to \{0,1\}$ be any function, that will be called a {\em vertex coloring}. For each $v\in V$, let us define $\St(v)$ as the set of edges that contain $v$ in their boundaries. Let $\deg(v)=\#\St(v)$, that will be called the {\em degree}  of the vertex $v$. Since $\mathcal{T}$ is a subtree of the binary tree, it follows that $\deg(v)\leq 3$ for all $v\in V$. 

Without loss of generality we can assume that the root element of the binary tree belongs to $\mathcal{T}$. This root element will be denoted by $\bm{\mathring{v}}$. The vertices of a tree are partitioned by levels from a root element: level $0$ is just the root element, and, recursively, a vertex $v$ is of level $k$, denoted by $\lvl(v)=k$ if it is connected by an edge with some vertex of level $k-1$.

Define an orientation on the binary tree just by declaring the origin of an edge $e$ as the boundary vertex with lowest level, see Figure~\ref{f:figure_2}. Let $o(e)$, $t(e)$ denote the origin and target of an oriented edge.
\end{defn}

\begin{figure}
\begin{center}
\includegraphics[scale=0.4]{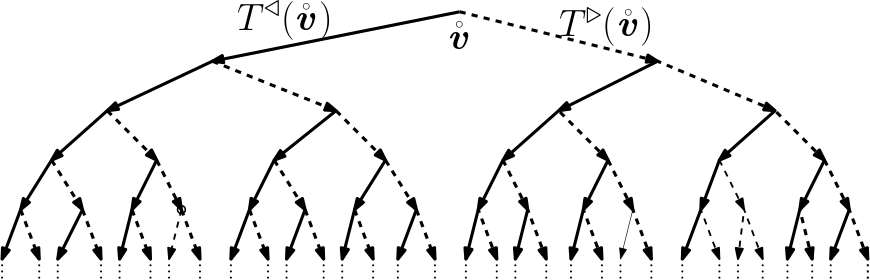}
\end{center}
\caption{The binary tree with the chosen orientation. Dashed (resp. bold) lines represent its right-sided (resp. left-sided) branches.}
\label{f:figure_2}
\end{figure}

\begin{defn}\label{d:step construction}
Let $\mathcal{T}$ be a subtree of the binary tree and let $\nu: V\to \{0,1\}$ be a vertex coloring. Let us define $K_v=S^2\setminus\bigsqcup_{e\in\St(v)}D_e^2$ if $\nu(v)=0$ and $K_v=T^2\setminus\bigsqcup_{e\in\St(v)}\mathring{D}_e^2$ if $\nu(v)=1$,  where $\{D_e^2\}_{e\in\St(v)}$ is a collection of $\deg{v}$ pairwise disjoint smooth disks in $S^2$ or $T^2$ respectively. 

Let $C^e_v$ denote the boundary component of $K_v$ obtained by removing the ball $\mathring{D}^2_e$, for $e\in\St(v)$. Define $S_{\mathcal{T},\nu}$ as the open orientable surface obtained from $\bigsqcup_{v\in V_F}K_v$ by attaching each boundary component $C^e_{o(e)}$ with  $C^e_{t(e)}$ via a reversing orientation homeomorphism.
\end{defn}

It is clear, from Proposition~\ref{p:classification}, that every noncompact oriented surface is homeomorphic to some $S_{\mathcal{T},\nu}$ for a suitable choice of $\mathcal{T}$ and $\nu$. We shall consider, without loss of generality, connected subtrees without ``dead ends'', i.e., vertices where $\deg{v}=1$ are forbidden. In this case the above process generates all the noncompact oriented surfaces with at least two ends. The unique noncompact oriented surface with one end satisfying the Condition $(\star)$ is the Loch Ness monster and will be treated separately.

\begin{defn}\label{d:decomposition}
A linear subtree (i.e., isomorphic to the Cayley graph of $\N$) of a tree will be called a {\em branch}.

We shall decompose any connected subtree of the binary tree (without dead ends and containing the root vertex) as a union of (oriented) branches.

The root vertex and any other vertex of degree $3$ are the origins of two edges. We declare one of this edges as {\em right-sided} and the other as {\em left-sided}\footnote{This prescription of ``left'' and ``right'' can be arbitrary but we shall use the planar embedding suggested by figure~\ref{f:Figure_2} for a visual picture of left-sided and right-sided edges as those that points to the left and right respectively.}. The target of a left-sided (resp. right-sided) edge is called a left-sided (right-sided) target from $v$. If $v$ has degree $2$ and is not the root vertex then there exists a unique edge with origin at $v$ and its target will be called the target from $v$, in this case the target is considered simultaneously as left and right-sided.

Let $\mathcal{T}$ be a connected subtree of the binary tree that contains $\bm{\mathring{v}}$ and no dead ends. Let $\mathcal{T}^\triangleleft(\bm{\mathring{v}})$ (resp.  $\mathcal{T}^\triangleright(\bm{\mathring{v}})$) be the branch of $\mathcal{T}$ whose initial vertex is $\bm{\mathring{v}}$ and defined inductively by the following property: it contains the left-sided (resp. right-sided) targets from any vertex in this graph. 

If $\deg(v)=3$ and $v \in \mathcal{T}^\triangleleft(w)$ (resp. $v \in \mathcal{T}^\triangleright(w)$) for some vertex $w$ then define  $\mathcal{T}^\triangleright(v)$ (resp. $\mathcal{T}^\triangleleft(v)$) as the branch with initial vertex at $v$ and defined inductively by the property: it contains the right-sided (resp. left-sided) target from any vertex in this graph.

It is clear that the edges of these branches form a partition of the set of edges of $\mathcal{T}$. Observe that for degree $3$ vertices only one branch, $\mathcal{T}^\triangleright(v)$ or $\mathcal{T}^\triangleleft(v)$, exists.
\end{defn}

\begin{defn}\label{d:construction with property star}
It is said that a coloring $\nu:V\to\{0,1\}$ satisfies the {\em Condition} ($\star$) if and only if every isolated end of $\mathcal{T}$ is accumulated by $1$'s. More precisely, if $V_\mathbf{e}$ is the set of vertices of a connected subgraph of $\cT$ which contains a neighborhood of an isolated end $\mathbf{e}$ then $1\in\nu(V_\mathbf{e})$.

It is clear that $S_{\mathcal{T},\nu}$ satisfies the Condition ($\star$) if and only if the vertex coloring $\nu$ satisfies the Condition ($\star$) just defined.
\end{defn} 

\begin{proof}[\bf{Proof of Proposition~\ref{p:surface construction}}]

Consider first the case with one end, this is the Loch Ness monster. Let $L^\triangleright, L^\triangleleft$ be surfaces homeomorphic to $\Sigma$. Let $\{B^\triangleright_n\mid\ n\in \mathbb \N\}$ and $\{B^\triangleleft_n\mid\ n\in \mathbb \N\}$ be respective enumerations of the boundary components of these surfaces. The open surface obtained by gluing $L^\triangleright$ and $L^\triangleleft$ by attaching the boundary $B^\triangleright_n$ with $B^\triangleleft_n$ has one end and infinite genus, therefore homeomorphic to the Loch-Ness monster (see figure~\ref{f:Figure_3}).

\begin{figure}[h]
\begin{center}
\includegraphics[scale=0.50]{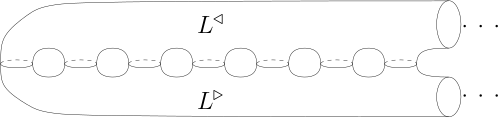}
\end{center}
\caption{Construction of the Loch Ness monster.}
\label{f:figure_3}
\end{figure}

Let us consider now the case of a surface $S$ with two or more ends satisfiyng the Condition ($\star$). Let $\mathcal{T}$ be a subtree of the binary tree and let $\nu:V\to\{0,1\}$ be a vertex coloring (satisfying also Condition $(\star)$) such that $S_{\mathcal{T},\nu}$ is homeomorphic to $S$.

For each branch $\mathcal{T}^\ast(v)$, $\ast\in\{\triangleright,\triangleleft\}$, defined in  Definition~\ref{d:decomposition}, let us consider a surface $L^\ast(v)$ homeomorphic (preserving orientation) to $\Sigma$. Similarly, for each isolated end  $\mathbf{e}$ (if there is any) of the tree $\cT$ let us consider a surface $L(\mathbf{e})$ homeomorphic to $\Sigma$.

Enumerate the boundary components of $L^\ast(v)$ and $L(\mathbf{e})$ as $\{B^\ast_n(v)\mid\ n\in \mathbb \N\}$, for some $\ast\in\{\triangleright,\triangleleft\}$ and $\{B_n(\mathbf{e})\mid\ n\in \mathbb \N\}$ respectively.

We attach now the boundary components of the previous surfaces, the resulting manifold will be homeomorphic to $S_{\mathcal{T},\nu}$. The construction will be inductive. At the step $k$ of the construction we shall obtain the construction of $S_{\mathcal{T},\nu}$ until the level $k$ of the tree $\mathcal{T}$.

For the step $0$ we need just the surfaces $L^\triangleright(\bm{\mathring{v}})$ and $L^\triangleleft(\bm{\mathring{v}})$. If $\nu(\mathring{\textbf{v}})=0$ then we attach $B^\triangleright_1(\mathring{\bm{v}})$ to $B^\triangleleft_1(\mathring{\bm{v}})$ via a reversing orientation homeomorphism. If $\nu(\mathring{\bm{v}})=1$ then we also attach the boundary components $B^\triangleright_2(\mathring{\bm{v}})$ with $B^\triangleleft_2(\mathring{\bm{v}})$. The given identification between these two surfaces reproduces the topology of $S_{\mathcal{T},\nu}$ at the unique vertex at level $0$ as desired.

Assume now that the construction was done until step $k$ and define the step $k+1$. Let $v_1,\dots, v_N$ be an enumeration of the vertices of $\cT$ at level $k+1$, we proceed now by finite induction on these vertices. 

Given $v_1$, there exists a unique vertex $w_1$ with level $\leq k$ such that $v_1\in \cT^{\bullet}(w_1)$ for some $\bullet\in\{\triangleright,\triangleleft\}$. Let $n_1$ be the first integer such that the boundary component $B^{\bullet}_{n_1}(w_1)$ was not already attached to another boundary component of other surface.

\begin{itemize}
\item If $\deg(v_1)=3$ and $\nu(v_1)=0$ then attach $B^{\bullet}_{n_1}(w_1)$ with $B^\ast_1(v_1)$, which is the first boundary component of $\cT^\ast(v_1)$, (observe that $\bullet$ and $\ast$ are different elements in $\{\triangleright,\triangleleft\}$). If $\nu(v_1)=1$ then we also attach $B^{\bullet}_{n_1+1}$ with $B^\ast_2(v_1)$ in order to produce a handle in this step.

\item If $\deg(v_1)=2$ and $\nu(v_1)=0$ then nothing needs to be done.

\item If $\deg(v_1)=2$, $\nu(v_1)=1$ and there exists a degree $3$ vertex in $\cT^{\bullet}(w_1)$ whose level is greater than $k+1$, then let $L^\ast(w)$ be the surface that has a component attached with $B^{\bullet}_{n_1-1}(w_1)$ and let $B^\ast_{n_w}(w)$ be the first boundary component in this surface that was still not attached to any other boundary component of other surface and attach $B^{\bullet}_{n_1}(w_1)$ with $B^\ast_{n_w}(w)$. This will produce a new handle in the construction.

\item If $\deg(v_1)=2$, $\nu(v_1)=1$ and there no exist vertices of degree $3$ whose level is greater than $k+1$, then this branch defines an isolated end $\mathbf{e}$. Let $\ell$ be the first integer such that the boundary component $B_{\ell}(\mathbf{e})$ of $L(\mathbf{e})$ was not already attached and then attach $B^{\bullet}_{n_1}(w_1)$ with $B_{\ell}(\mathbf{e})$. This will produce accumulation of genus to this end as required by Condition $(\star)$.
\end{itemize}

Continue the above process with $v_2,\dots, v_N$, and apply the same procedure with the next steps of the construction.

Observe that at each vertex with degree $3$ we include a new surface homeomorphic to $\Sigma$ that has only finitely many components attached with the surfaces used in the previous steps, therefore a bifurcation is effectively produced by our inductive construction.

Since $\nu$ satisfies property ($\star$) it follows that any linear subtree $\cT^\ast(v)$ contains infinitely many vertices with degree $3$ or infinitely many vertices with color $1$. This guarantees that the induction process never stops and eventually every boundary component of every surface $L^\ast(v)$ or $L(\mathbf{e})$ is attached to some other component of other surface. By Proposition~\ref{p:classification} the surface obtained by the previous boundary identifications is homeomorphic to $S_{\mathcal{T},\nu}$ as desired.

Relative to the precise statement of Proposition~\ref{p:surface construction}, the family $\{L^\triangleright_n\}$ will be an enumeration of the previous used surfaces whose superscript is $\triangleright$ and the surfaces $L(\mathbf{e})$ that are attached to a surface of the form $L^\triangleleft(v)$. The family $\{L^\triangleleft_n\}$ is defined analogously. The bijection $\delta$ is defined from the indices of the boundary components attached in the previous construction.
\end{proof}

\begin{figure}[h]
\begin{center}
\includegraphics[scale=0.29]{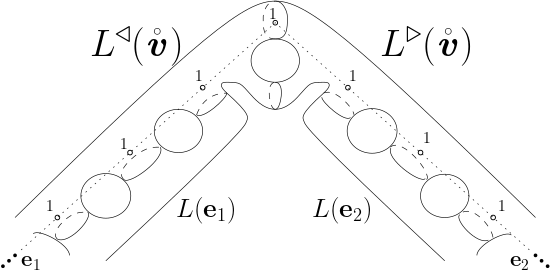}
\includegraphics[scale=0.25]{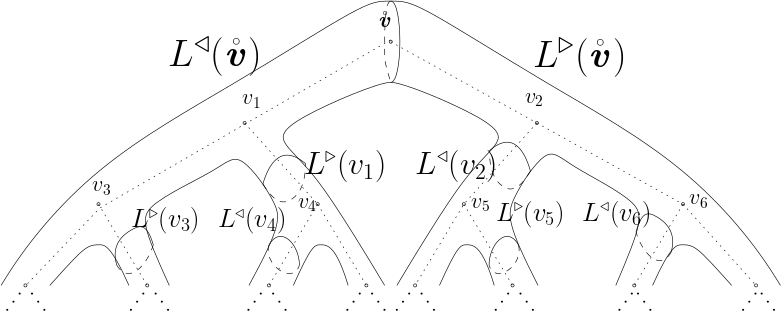}
\end{center}
\caption{Construction of the Jacob's ladder (left) and the Cantor Tree (right) from surfaces homeomorphic to $\Sigma$.}
\label{f:figure_4}
\end{figure}

\begin{rem}\label{r:independence}
The topology of $S_\delta$ does not depend in the enumeration of the boundary components of the surfaces $L^\triangleright_n$ and $L^\triangleleft_n$, $n\in\N$. This readily follows from the following fact: let $\tau:\N\to\N$ be a bijection, there exists a preserving orientation homeomorphism $f_\tau:\Sigma\to\Sigma$ such that $f_\tau(B_n)=B_{\tau(n)}$ for all $n\in\N$. The proof of this fact is easy and left to the reader.
\end{rem}

\section{Transverse gluings}

\begin{defn}[Foliated Block]
A {\em foliated block} is a minimal foliation on a compact $3$-manifold with a transverse boundary that consists in a torus. The generic leaf is the surface $\Sigma$ (the plane minus a countable family of balls) and the trace foliation of the boundary torus is a trivial product foliation whose leaves are circles.
\end{defn}

The easiest way to obtain foliated blocks comes from a transversely oriented minimal hyperbolic foliation whose generic leaf is the plane. Take  a closed trasverse curve $\gamma$ to that kind of foliation  and remove the interior of a small tubular neighborhood of $\gamma$, the resulting foliation is a foliated block. 

By minimality, every leaf of a foliated block meets $\gamma$ in a dense set, henceforth the generic leaf of $\FF$ is homeomorphic to the surface $\Sigma$ studied in the previous section. Each leaf of a foliated block intersects the transverse torus in a countable and dense set formed by countably many circles which are the boundary components of the leaf.

Let $\FF^\triangleright$ and $\FF^\triangleleft$ be two foliated blocks as above. To prove the Theorem~\ref{t:thm1} we will realize the prescribed leaf topologies through a $C^\infty$ gluing map between the tori boundaries preserving their trace foliations.

We shall need the following technical lemma that will allow the definition of a suitable gluing map. This lemma is a version of \cite[Proposition 1]{Shiga}, we include the idea of the proof for completeness.

\begin{lemma}\label{l:Cinfty}
Let $X$ and $Y$ be arbitrary dense subsets of $S^1$. Let $x_1,\dots,x_k\in X$ and $y_1,\dots,y_k\in Y$. Assume that there exits a $C^\infty$ map $\varphi:S^1\to S^1$, such that $\varphi(x_i)=y_i$ for $i=1,\dots,k$. Let $x_{k+1}\in X$ be another point different from $x_i$ for $1\leq i\leq k$. For all $\varepsilon>0$ there exists $y_{k+1}\in Y$ and a $C^\infty$ diffeomorphism $\varphi_\varepsilon:S^1\to S^1$ such that $\varphi_\varepsilon(x_i)=y_i$ for $i=1,\dots,k+1$ and $\|\varphi-\varphi_\varepsilon\|_{k+1} <\varepsilon$, where $\|\cdot\|_k$ denotes the usual $k$-norm.
\end{lemma}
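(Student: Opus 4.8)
The plan is to interpolate one new constraint at a time, keeping everything $C^\infty$ and controlling the $C^{k+1}$-distance to the original map. First I would reduce the problem to a local modification near the new point $x_{k+1}$. Since $X$ is dense and $x_{k+1}\neq x_i$, there is a small open arc $J\subset S^1$ containing $x_{k+1}$ whose closure is disjoint from $\{x_1,\dots,x_k\}$; any modification supported in $J$ automatically preserves the equalities $\varphi_\varepsilon(x_i)=y_i$ for $i\le k$. So the task becomes: perturb $\varphi$ inside $J$, by a perturbation which is $C^\infty$, vanishes to infinite order at $\partial J$ (so the result is globally $C^\infty$), is $C^{k+1}$-small, keeps the map a diffeomorphism, and sends $x_{k+1}$ to some point of $Y$.

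Next I would produce the perturbation explicitly. Let $p=\varphi(x_{k+1})$. Choose a bump function $\psi\in C^\infty(S^1)$ supported in $J$ with $\psi(x_{k+1})=1$; rescaling the arc $J$ to be very short makes $\|\psi\|_{k+1}$ large, so instead I fix $J$ once (short enough that $\varphi'$ is bounded away from $0$ on $J$ and $\varphi(J)$ is a short arc avoiding $y_1,\dots,y_k$) and use the family $\varphi_t = \varphi + t\,\psi$ for small real $t$. For $|t|$ small enough $\varphi_t$ is still a $C^\infty$ diffeomorphism (its derivative stays positive, being a small perturbation of $\varphi'$ on the compact set $\overline J$ and equal to $\varphi'$ off $J$), and $\|\varphi-\varphi_t\|_{k+1} = |t|\,\|\psi\|_{k+1} \to 0$. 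The map $t\mapsto \varphi_t(x_{k+1}) = p + t$ is a homeomorphism from a small interval around $0$ onto a small arc $I$ around $p$ in $S^1$. Since $Y$ is dense in $S^1$, the arc $I$ (shrunk if necessary so that the corresponding $t$ makes $\varphi_t$ a diffeomorphism and $\|t\psi\|_{k+1}<\varepsilon$) contains a point $y_{k+1}\in Y$. Take the corresponding $t$ and set $\varphi_\varepsilon=\varphi_t$; then $\varphi_\varepsilon(x_{k+1})=y_{k+1}\in Y$, $\varphi_\varepsilon(x_i)=y_i$ for $i\le k$, $\varphi_\varepsilon$ is a $C^\infty$ diffeomorphism, and $\|\varphi-\varphi_\varepsilon\|_{k+1}<\varepsilon$, as required.

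The only delicate point is the simultaneous bookkeeping: one must fix the arc $J$ and bump $\psi$ \emph{first}, which determines the constant $\|\psi\|_{k+1}$, and only afterward choose $|t|$ small — small enough to guarantee positivity of $(\varphi_t)' = \varphi' + t\psi'$ on $\overline J$, smallness $\|t\psi\|_{k+1}<\varepsilon$, and that the target arc $\varphi_t(J)$ still misses $y_1,\dots,y_k$ — before invoking density of $Y$ on the resulting (nonempty, open) arc of admissible values $\varphi_t(x_{k+1})$. I expect this ordering of quantifiers to be the main thing to get right; everything else is a routine application of density and of the stability of diffeomorphisms under small $C^1$-perturbations. (This is exactly the mechanism of \cite[Proposition 1]{Shiga}; the additive perturbation $\varphi+t\psi$ replaces the composition-with-a-near-identity-diffeomorphism argument there, and works verbatim on $S^1$.)
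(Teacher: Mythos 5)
Your proof is correct and follows essentially the same idea as the paper: perturb $\varphi$ locally near the new constraint by a compactly supported bump, keep the $C^{k+1}$-norm small, and invoke density of $Y$ to land $\varphi_\varepsilon(x_{k+1})$ in $Y$. The only cosmetic difference is that the paper postcomposes with the time-$t$ flow of a bump vector field near $\varphi(x_{k+1})$ on the target circle (giving $\varphi_\varepsilon=\phi_t\circ\varphi$), whereas you add $t\psi$ with $\psi$ a bump near $x_{k+1}$ on the source; both mechanisms are equivalent and the quantifier ordering you flag is handled identically.
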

\begin{proof}
There exits a $C^\infty$ diffeomorphism $s_\varepsilon:S^1\to S^1$ that it is $\delta$ close to the identity in the $C^{k+1}$ topology,  $s_\varepsilon(y_i)=y_i$ for $i=1,\dots,k$, $s_\varepsilon(\varphi(x_{k+1})) \in Y$ and $\|\varphi - s_\varepsilon\circ\varphi\| <\varepsilon$, define $y_{k+1} =s_\varepsilon(\varphi(x_{k+1}))$.

It is clear that such a function $s_\varepsilon$ does exist, just define a $C^\infty$ bump function $b:S^1\to\R$ that is sufficiently close to cero in the $C^{k+1}$ topology and it is supported in a small neighborhood of $\varphi(x_{k+1})$ that does not contain $\varphi(x_1),\dots,\varphi(x_k)$. Let us consider the flow $\phi_t:S^1\to S^1$ associated to the vector field $b\cdot\partial\theta$. By the density of $Y$, there exists a sequence $t_n\to 0$, with $t_n>0$, such that $\phi_{t_n}(\varphi(x_{k+1}))\in Y$. Just take $s_\varepsilon = \phi_{t_n}$ for some $n$ sufficiently large such that $\|\varphi-\phi_{t_n}\circ\varphi\|_{k+1}<\varepsilon$.

Under these choices $\varphi_\varepsilon = s_\varepsilon\circ \varphi$.
\end{proof}

\begin{rem}
The previous Lemma also holds for any manifold (with obvious modifications for the noncompact case), details can be found in \cite{Shiga}.
\end{rem}

\section{Realizing the topologies}

A \textbf{generic} leaf of $\FF^\triangleright$ and $\FF^\triangleleft$ will be denoted by $L^\triangleright$ and $L^\triangleleft$ respectively, all of them are homeomorphic to $\Sigma$. It will be always assumed that the orientations of the foliated blocks are chosen in order to get opposite orientations in the transverse boundaries (thus the leaves obtained after attaching both foliated blocks are oriented). The transverse regularity of the foliated blocks will be assumed to be $C^\infty$.

Let us identify the boundary tori of $\FF^\triangleright$ and $\FF^\triangleleft$ with $S^1\times T^\triangleright$ and $S^1\times T^\triangleleft$ respectively, where $T^\triangleright$ and $T^\triangleleft$ are smooth transverse circles and the trace foliations are identified with the product foliation on these tori. It is clear that any leaf preserving homeomorphism $\tilde{f}: S^1\times T^\triangleright\to S^1\times T^\triangleleft$ induces a homeomorphism $f: T^\triangleright\to T^\triangleleft$ and reciprocally. We shall assume usual circle parametrizations on both transversals $T^\ast$, $\ast\in\{\triangleright,\triangleleft\}$. %as metric subspaces of $\FF^\ast$, can be chosen to be {\color{red}isometric}, {\color{red}this can be achieved modulo a conformal change of metric in a foliated block}.

\begin{proof}[\bf{Proof of Theorem}~\ref{t:thm1}]
Let $S$ be a  noncompact surface satisfying Condition ($\star$). By means of Proposition~\ref{p:surface construction}, there exists two countable collections $\cL^\triangleright=\{L^\triangleright_{n}\mid\ n\in I^\triangleright\}$ and $\cL^\triangleleft=\{L^\triangleleft_{n}\mid\ n\in I^\triangleleft\}$ of manifolds homeomorphic to $\Sigma$ (the generic leaf) and a bijection $\delta:I^\triangleright\times\N\to I^\triangleleft\times\N$ such that the manifold $S_\delta$ is homeomorphic to $S$ for any arbitrary enumeration $\{B^\ast_{i,j}\}_{n\in\N}$ of boundary components of each $L^\ast(i)$ respectively). Identify $\cL^\triangleright$ and $\cL^\triangleleft$ with countable collections of generic leaves in $\FF^\triangleright$ and $\FF^\triangleleft$ respectively. Set $k(i,j)=\pi_1(\delta(i,j))$ and $\ell(i,j)=\pi_1(\delta^{-1}(i,j))$, where $\pi_1$ is the projection onto the first factor. %{\color{red}, i.e., the indices of the leaves that contain the boundary component $B_{\delta(i,j)}$ and $B_{\delta^{-1}(i,j)}$ respectively}.

Set $X_i=L^\triangleright_i\cap T^\triangleright$, for $i \in I^\triangleright$, and $Y_i=L^\triangleleft_i\cap T^\triangleleft$ $i \in I^\triangleleft$ , and set $X=\bigsqcup_i X_i$ and $Y=\bigsqcup_i Y_i$, all these sets are dense in the respective transverse circles by the minimality of the foliated blocks. Both $X_i$ and $Y_i$ are countable so we can choose good orders on each one of these sets, i.e., an initial enumeration that we do not explicit with subindices as we will only make use of the induced good order.

Let us define a $C^\infty$ diffeomorphism $f:T^\triangleright\to T^\triangleleft$ and enumerations $X_i =\{x_{i,j}\mid\ j\in\N\}$, for $i\in I^\triangleright$, and $Y_i=\{y_{i,j}\mid\ j\in\N\}$, for $i\in I^\triangleleft$, such that $f(x_{i,j})=y_{\delta(i,j)}$.

The construction will be inductive.  Let $x_{1,1}\in X_1$ and $y_{\delta(1,1)}\in Y_{k(1,1)}$ be the first elements of $X_{1}$ and $Y_{k(1,1)}$ respectively (relative to the previously chosen good order) and let us take an arbitrary $C^\infty$ diffeomorphism (for instance a rotation) such that $f_1(x_{1,1})=y_{\delta(1,1)}$.

If $\delta(1,1) = (1,1)$ then set $f_2=f_1$, otherwise set $y_{1,1}\in Y_1$ as the first element of $Y_1$ different from $y_{\delta(1,1)}$. By the density of the sets $X_i$ and Lemma~\ref{l:Cinfty}, there exists $x\in X_{\ell(1,1)}$ and a $C^\infty$ diffeomorphism $f_{2,x}$ such that $f_{2,x}^{-1}(y_{1,1})=x$, $f_{2,x}(x_{1,1})=y_{\delta(1,1)}$ and $\|f_1 - f_{2,x}\|_2<1/2$. Define $x_{\delta^{-1}(1,1)}$ as the first element in $X_{\ell(1,1)}$ that satisfies the previous condition and set $f_2 = f_{2,x_{\delta^{-1}(1,1)}}$. Thus $f_2(x_{1,1})=y_{\delta(1,1)}$ and $f_2(x_{\delta^{-1}(1,1)})=y_{1,1}$.

Assume now that $x_{i,j}$, $x_{\delta^{-1}(i,j)}$, $y_{i,j}$ and $y_{\delta(i,j)}$ were defined\footnote{Since the sets $I^\ast$ can be finite, the index $i$ may be undefined when $n>\# I^\ast$, we shall consider implicit that $i\in I^\ast$.} for all $i,j\leq n$ as well as $C^\infty$ circle diffeomorphisms $f_k$ for $1\leq k\leq 2n^2$ such that $f_{2n^2}(x_{i,j})=y_{\delta(i,j)}$ and $f_{2n^2}(x_{\delta^{-1}(i,j)})=y_{i,j}$ for all $i,j\leq n$ and $\|f_{k-1} -f_{k}\|_{k}<\frac{1}{k}$ for all $k\in\{2,\dots,2n^2\}$.

Fix $i=1$, if $(1,n+1)\in\{\delta^{-1}(i,j)\mid\ 1\leq i,j\leq n\}$ then there is nothing to do as both $x_{1,n+1}$ and $y_{\delta(1,n+1)}$ were already defined at this point, set $f_{2n^2+1}=f_{2n^2}$. Let $i_0\leq n+1$ be the minimum integer (if exists) such that $(i_0,n+1)$ does not belong to $\{\delta^{-1}(i,j)\mid\ 1\leq i,j\leq n\}$ and set $f_{2n^2+m}=f_{2n^2}$ for $m\in\{1,\dots,i_0-1\}$. Let $x\in X_{i_0}$ be the minimum element in $X_{i_0}$ which is different to the previously defined ones and declare $x_{i_0,n+1}=x$. By the density of $Y_{k(i_0,n+1)}$ and Lemma~\ref{l:Cinfty}, there exists $y\in Y_{k(i_0,n+1)}$, different from the previously defined ones, and a $C^\infty$ circle diffeomorphism $f_{2n^2+i_0,y}$ such that, $f_{2n^2+i_0}(x_{i,j})=y_{\delta(i,j)}$, $f_{2n^2+i_0}(x_{\delta^{-1}(i,j)})=y_{i,j}$ for all $i,j\leq n$, $f_{2n^2+i_0,y}(x_{i_0,n+1})=y$ and $$\|f_{2n^2}-f_{2n^2+i_0,y}\|_{2n^2+i_0}<\frac{1}{2n^2+i_0}\,.$$ Define $y_{\delta(i_0,n+1)}$ as the minimum element $y\in Y_{k(i_0,n+1)}$ that satisfies these conditions and set $f_{2n^2+i_0}=f_{2n^2+i_0,y_{\delta(i_0,n+1)}}$. Repeat, recursively, this process with any index $(i,n+1)$ where $y_{\delta(i,n+1)}$ was still not defined. The previous inductive reasoning must be applied again to the indices $(n+1,j)$, for $1\leq j\leq n+1$, where $y_{\delta(n+1,j)}$ was still not defined and, of course, whenever $\# I^\triangleright\geq n+1$.

Now, we repeat an analogous argument to define $x_{\delta^{-1}(i,n+1)}$ for $1\leq i\leq n$ and $x_{\delta^{-1}(n+1,j)}$ for $j\in\{1,\dots,n+1\}$ in the case that they were not already defined obtaining a function $f_{2n^2}:S^1\to S^1$ satisfying the desired conditions, details are left to the reader. This shows that the above process is inductive. Observe that the arbitrary good orders chosen at the beginning of the proof on the sets $X_i$ and $Y_i$ guarantee that every point in $X$ (resp. $Y$) has eventually an (unique) image (resp. preimage) in $Y$ (resp. $X$).

The sequence of $C^\infty$ diffeomorphisms $f_k$, $k\in\N$, is Cauchy in the $C^m$ topology for every $m\in\N$ and therefore it converges to a $C^\infty$ diffeomorphism $f:S^1\to S^1$ that satisfies $f(x_{i{\color{red},}j})=y_{\delta(i,j)}$ for all $i,j\in I^\triangleright\times\N$ as desired.

Observe that $x_{i,j}$ (resp. $y_{i,j}$) belongs to a boundary component, that can be denoted by $B^\triangleright_{i,j}$ (resp. $B^\triangleleft_{i,j}$), of $L^\triangleright_i$ (resp. $L^\triangleleft_i$).

As we claimed above, the $C^\infty$ diffeomorphism $f:T^\triangleright\to T^\triangleleft$ also defines a gluing map $\tilde{f}=\id\times f$ between the boundary tori of the foliated blocks (here, leaf boundary components are identified with horizontal fibers of the tori).

The foliation $\FF^{\Join}_f$ obtained from the union of $\FF^\triangleright$ with $\FF^\triangleleft$ using this gluing map contains a leaf which is homeomorphic to $S_\delta$ by construction (see also Remark~\ref{r:independence}).

If we prescribe a countable family of noncompact oriented surfaces $S_n$ satisfying the Condition $(\star)$, then define $S=\bigsqcup_n S_n$ which is a nonconnected noncompact oriented surface that still can be seen as the union of countably many manifolds homeomorphic to $\Sigma$. So the previous construction still applies to get a countable collection of leaves, each one homeomorphic to the connected components of $S$, these are precisely the surfaces $S_n$.

In order to guarantee that the generic leaf is a Cantor tree, we can add a leaf homeomorphic to the Cantor tree to our prescribed leaves. With this addition the generic leaf cannot support handles and hence it is the Cantor tree (plane and cylinder are impossible since the generic leaves of the foliated blocks are homeomorphic to $\Sigma$).
\end{proof}

\begin{proof}[\bf{Proof of Theorem}~\ref{t:thm2}]
Observe first that a leaf Satisfying Condition ($\star\star$) also satisfies Condition ($\star$). If the generic leaf is a Cantor tree  with handles then realize the given surfaces $S_n$ in a foliation $\FF$ given by Theorem~\ref{t:thm1}. Then choose a closed transverse circle to $\FF$, remove a tubular neighborhood of this transversal and attach to the boundary a manifold homeomorphic to $H\times S^1$, where $H$ is a handle (the complement of the interior of a smooth disk in a torus), and trivially foliated by the slices $H\times\{\ast\}$. This surgery attach handles to every end and therefore the generic leaf is transformed into a Cantor tree  with handles. Attaching handles does not change the topology of the leaves homeomorphic to $S_n$ since, by hypothesis, every end of these surfaces is accumulated by genus.

For the case where the generic leaf is homeomorphic to the Loch Ness monster we mimic the previous procedure. By means of \cite[Theorem 1]{Gusmao-Menino} we know that there exists a foliation transversely bi-Lipschitz $\FF$ on a closed (Seifert) $3$-manifold whose generic leaf is the plane and contain a countable collection of leaves $L_n$ each one homeomorphic to $S_n$ respectively. Performing the same surgery as before we obtain the desired foliation whose generic leaf is the Loch Ness Monster.
\end{proof}

\section{Final Comments}

Relative to the ambient topology of the constructed foliations, observe that it will obviously depend on the topology of the foliated blocks. For instance, if the foliated blocks are obtained  by removing a tubular neighborhood of a regular fiber on a foliation transverse to the fibers of a Seifert manifold then the resuting manifold will be also Seifert (with non maximal Euler number). Usually our construction provides graph manifolds, for instance, we can perform {\em Hirsch surgeries} as described in \cite[Subsection 3.3]{ADMV} with gluing maps as the constructed in this work to get minimal hyperbolic foliations in a graph manifold with countably many leaves homeomorphic to any prescribed family of surfaces satisfying the Condition $(\star)$ (generic leaf in this case is the Cantor tree).

Observe also that our construction also works when the trace foliation of the boundary tori is not trivial but still given by circles (a linear foliation associated to a rational rotation). In this case the gluing map is a perturbation of a suitable Dehn map between the boundary tori.

The construction can be also applied to foliated blocks with more boundary components, in this case we can make the construction with gluing maps between these boundary tori or construct minimal hyperbolic foliations on graph manifolds with more pieces. Morover we can apply the procedure to foliated blocks whose generic leaf is the Loch Ness monster punctured along countably many balls (instead of the surface $\Sigma$), with this kind of foliated blocks we can only construct foliations whose generic leaves satisfy the Condition $(\star\star)$. This, in principle, extends the family of ambient $3$-manifolds where Theorem~\ref{t:thm2} applies.

Our construction does not work when the trace foliation of the boundary tori have noncompact orbits. This would imply that the generic leaf of the foliated block would have noncompact boundary components. Although we can still obtain many interesting topologies by gluing these kind of generic leaves (as it is done in \cite{Gusmao-Menino}), the main point is that Remark~\ref{r:independence} does not apply in this context as arbitrary reorderings of noncompact boundary components cannot be realized in general by a homeomorphism of the surface. This is one of the main differences between the arguments in \cite{Gusmao-Menino} and those given in this work.

Relative to the transverse regularity, recall that the construction given in \cite{Gusmao-Menino} cannot be realized in regularity $C^2$, since one of the cases of Theorem~\ref{t:thm2} depends on this result, it inherits this restriction in its transverse regularity.  For Theorem~\ref{t:thm1}, it seems hard to improve the regularity to real analytic and this would depend in a real analytic version of Lemma~\ref{l:Cinfty}. The existence of a critical regularity as well as topological contraints for the coexitence of some topologies as leaves of the same codimension one minimal hyperbolic foliation remains as an open problem for these cases.

\begin{ack}
The second author wants to thank the Programa Nosso Cientista Estado do Rio de Janeiro 2015-2018 FAPERJ-Brazil (``Din\^amicas n\~ao hiperb\'olicas''), MathAmSud 2019-2020 CAPES-Brazil (``Rigidity and Geometric Structures on Dynamics''), the CNPq research grant 310915/2019-8 and the Ministerio de Ciencia e Innovaci\'on (Spain) grant PID2020-114474GB-I00 that partially supported this research. 
\end{ack}


\begin{thebibliography}{99}
%\bibitem{AS}{\sc L. V. Ahlfors and L. Sario}. {\it Riemann surfaces}, Princeton Univ. Press, Princeton, N. J., Chap. I, §6. MR 22 \# 5729 ( 1960).

\bibitem{ADMV}{\sc F. Alcalde Cuesta, F. Dal'bo, M. Mart\'inez, A. Verjovsky}. {\it Minimality of the horocycle flow on foliations by hyperbolic surfaces with non-trivial topology}, Discrete and Continuous Dynamical Systems 36(9) 4619--4635 (2014) and 37(8), 4585--4586 (2017).

\bibitem{ADMV2}{\sc F. Alcalde Cuesta, F. Dal'bo, M. Mart\'inez, A. Verjovsky}. {\it On the construction of minimal foliations by hyperbolic surfaces on 
$3$-manifolds}, https://arxiv.org/abs/1611.09833 (2017).

\bibitem{ABMP}{\sc S. Alvarez, J. Brum, M. Mat\'inez, R. Potrie}. {\it Topology of Leaves for Minimal Foliations by Hyperbolic Surfaces}, Journal of Topology, Volume 15, Issue 2 302-346,  (2022).

\bibitem{AB}{\sc S. Alvarez, J. Brum}. {\it Topology of Leaves for Minimal Lanmination by  non-simply Connected  Surfaces},  Groups, Geometry and Dynamics, Volume 16, Issue 1, pp: 179-223, (2022).

\bibitem{Blanc}{\sc E. Blanc}. {\it  Laminations minimales résiduellement à 2 bouts}, Comment. Math. Helv., 78(4):845-864, (2003). 


\bibitem{Candel-Conlon-I-2000}{\sc A. Candel, L. Conlon}. {\it Foliations I}, Graduate Studies in Mathematics 23, American Mathematical Society. 
Providence, Rhode Island (2000).

%\bibitem{Candel-Conlon-II-2000}{\sc A. Candel, L. Conlon}. {\it Foliations II}. Graduate Studies in Mathematics 60, American Mathematical Society. Providence, Rhode Island (2000).

\bibitem{Candel1}{\sc A. Candel}. {\it Uniformization of surface laminations}, Ann. Sci. Ecole Norm. Sup. 26 (1993) 489-516.

\bibitem{Cantwell-Conlon-1998}{\sc J. Cantwell, L. Conlon}. {\it Generic leaves}, Comment. Math. Helv., 73(2):306--336, 1998.

\bibitem{Cantwell-Conlon-1987} {\sc J. Cantwell, L. Conlon}. {\it Every surface is a leaf}, Topology Vol. 26, No 3, pages 265--285 (1987).


\bibitem{E-M-T}{\sc D. B. A. Epstein, K. C. Millett, and D. Tischler}. {\it Leaves without holonomy}, J. London Math. Soc. (2), 16(3):548-552, 1977.

%\bibitem{F} {\sc H. Freudenthal}. {\it Über die Enden topologischer Räume und Gruppen}, Math. Z. 33, 692-713 (1931).

%\bibitem{Ghys1}{\sc E. Ghys}. {\it Topologie des feuilles génériques}, Ann. of Math. (2), 141(2):387-422, 1995.


%\bibitem{Ghys2}{\sc E. Ghys}. {\it Groups Acting on the Circle}, L'Enseignement Math\'ematique, Vol.47 (2001).

\bibitem{Ghys3}{\sc E. Ghys}. {\it Actions localement libres du groupe affine}, Invent. Math. 82 (1985), no. 3,
479-526. 

\bibitem{Ghys4}{\sc E. Ghys}. {\it Classe \text {d'  Euler} et minimal exceptionnel}, Topology 26 (1987), no. 1, 93-105. 

\bibitem{Gusmao-Menino}{\sc P. Gusmão, Carlos Meniño Cotón}.{\it Every Noncompact Surface is a Leagf of a Minimal Foliation}, https://arxiv.org/pdf/1910.13839.pdf

%\bibitem{Harpe}{\sc P. de la Harpe}. {\it Topics in Geometric Group Theory}, Chicago Lectures in Mathematics. University of Chicago Press (2000).

\bibitem{Hector}{\sc G. Hector}. {\it Feuilletages en cylindres}, In Geometry and topology (Proc. III Latin Amer. School of Math., Inst. Mat. Pura Aplicada CNPq, Rio de Janeiro, 1976), volume 597 of Lecture Notes
in Math., pages 252-270. Springer, Berlin, 1977.
 
\bibitem{kerekjarto}{\sc B. Ker\'ekj\'art\'o}. {\it Vorlesungen \"uber Topologie}. I, Springer Berlin, 1923. 

\bibitem{Richards}{\sc I. Richards}. {\it On the classification of noncompact surfaces}, Trans. Am. Soc., 106-2 (1963), 259--269.

\bibitem{Shiga}{\sc K. Shiga}. {\it Manifolds and discrete structures}, Kodai Math. J., 6 (1983) 204--219.
\end{thebibliography}
\end{document}